\documentclass[leqno,onefignum,onetabnum]{siamltex1213}
\usepackage{amsfonts,amsmath,amssymb}
\usepackage{booktabs,ctable,threeparttable,multirow}
\usepackage{graphicx,boxedminipage,appendix}
\usepackage{bm,enumerate,algorithm,algorithmic}
\usepackage[caption=false]{subfig}
\usepackage{mathrsfs}

\usepackage{amsthm}
\usepackage{lineno}
\usepackage{enumerate}
\usepackage{epstopdf}

\newtheorem{theoremmy}{Theorem}[section]

\newtheorem{exper}[theoremmy]{Experiment}
\newtheorem{propositionmy}[theoremmy]{Proposition}
\numberwithin{equation}{section}

\newcommand{\sep}[0]{{\rm sep}}
\newcommand{\N}[0]{\mathcal{N}}
\newcommand{\X}[0]{\mathcal{X}}
\newcommand{\UU}[0]{\mathcal{U}}
\newcommand{\V}[0]{\mathcal{V}}
\newcommand{\OO}[0]{\mathcal{O}}
\newcommand{\bsmallmatrix}[1]{\begin{bmatrix}\begin{smallmatrix}#1\end{smallmatrix}\end{bmatrix}}

\title{A cross-product free Jacobi-Davidson type method for computing
a partial generalized singular value decomposition of a large matrix
pair\thanks{Supported by the National Natural Science
Foundation of China (No.11771249).}}

\author{
Jinzhi Huang\thanks{Department of Mathematical Sciences,
Tsinghua University, 100084 Beijing, China
(\url{huangjz15@mails.tsinghua.edu.cn}).}
\and
Zhongxiao Jia\thanks{Corresponding author. Department of Mathematical Sciences,
Tsinghua University, 100084 Beijing, China
(\url{jiazx@tsinghua.edu.cn})}
}

\begin{document}
\maketitle

\begin{abstract}
A Cross-Product Free (CPF) Jacobi-Davidson (JD) type method is
proposed  to compute a partial generalized singular value decomposition
(GSVD) of a large regular matrix pair $(A,B)$. It
implicitly solves the mathematically equivalent generalized eigenvalue
problem of $(A^TA,B^TB)$ but does not explicitly
form the cross-product matrices and thus avoids the possible accuracy loss
of the computed generalized singular values and generalized singular vectors.
The method is an inner-outer iteration method, where the expansion
of the right searching subspace forms the inner iterations that approximately
solve the correction equations involved and
the outer iterations extract approximate GSVD components with respect to the
subspaces.
Some convergence results are established for the inner and outer iterations,
based on some of which practical stopping criteria are designed for the inner
iterations. A thick-restart CPF-JDGSVD algorithm with deflation is developed to
compute several GSVD components.
Numerical experiments illustrate the efficiency of the algorithm.
\end{abstract}

\begin{keywords}
Generalized singular value decomposition,
generalized singular value, generalized singular vector,
extraction approach, subspace expansion,
Jacobi-Davidson method, correction equation,
inner iteration, outer iteration
\end{keywords}

\begin{AMS}
65F15, 15A18, 15A12, 65F10
\end{AMS}

\pagestyle{myheadings}
\thispagestyle{plain}
\markboth{JINZHI HUANG AND ZHONGXIAO JIA}{A JACOBI-DAVIDSON TYPE METHOD FOR THE GSVD COMPUTATION}

\section{Introduction}\label{sec:1}
The generalized singular value decomposition (GSVD) of a matrix
pair is first introduced by Van Loan \cite{van1976generalizing}
and then developed by Paige and Saunders \cite{paige1981towards}.
It has become an important analysis means and
computational tool \cite{golub2012matrix},
and has been used extensively in, e.g.,
solutions of discrete linear ill-posed problems
\cite{hansen1998rank}, weighted or generalized least squares problems
\cite{bjorck1996numerical},
information retrieval \cite{howland2003structure},
linear discriminant analysis \cite{park2005relationship},
and many others \cite{betcke2008generalized,chu1987singular,
golub2012matrix,kaagstrom1984generalized,vanhuffel}.

Let $A\in\mathbb{R}^{m\times n}$ and $B\in\mathbb{R}^{p\times n}$
with $m\geq n$ be large matrices, and assume that the
stacked matrix $\bsmallmatrix{A\\B}$ has full column rank, i.e.,
$\N(A)\cap\N(B)=\{\mathbf{0}\}$ with $\N(A)$ and $\N(B)$ the null
spaces of $A$ and $B$, respectively.
Then such matrix pair $(A,B)$ is called {\em regular}.
Denote $q_1=\dim(\N(A))$ and $q_2=\dim(\N(B))$,  $q=n-q_1-q_2$,
and $l_1=\dim(\N(A^T))$ and $l_2=\dim(\N(B^T))$.
Then the GSVD of the regular matrix pair $(A,B)$ is
\begin{equation}\label{gsvd}
\left\{\begin{aligned}
&U^TAX=\Sigma_A=\diag\{C,\mathbf{0}_{l_1, q_1},I_{q_2}\}, \\
&V^TBX=\Sigma_B=\diag\{S,I_{q_1},\mathbf{0}_{l_2, q_2}\},
\end{aligned}\right.
\end{equation}
where $X=[X_q,X_{q_1},X_{q_2}]$ is nonsingular,
$U=[U_q,U_{l_1},U_{q_2}]$ and $V=[V_q,V_{q_1},V_{l_2}]$  are orthogonal,
and $C=\diag\{\alpha_1,\dots,\alpha_q\}$ and
$S=\diag\{\beta_1,\dots,\beta_q\}$ are diagonal matrices that satisfy
\begin{equation*}
  0<\alpha_i,\beta_i<1 \quad\mbox{and}\quad
 \alpha_i^2+\beta_i^2=1,  \quad i=1,\dots,q;
\end{equation*}
see \cite{paige1981towards}.
Here, in order to distinguish the  block submatrices of $X,U,V$,
we have adopted the subscripts to
denote their column and row numbers, and have denoted by $I_{k}$ and
$\mathbf{0}_{k,l}$ the identity matrix of order $k$ and zero matrix
of order $k\times l$, respectively.
The subscripts of identity and zero matrices will be omitted in the
sequel when their orders are clear from the context. It follows
from \eqref{gsvd} that $X^T(A^TA+B^TB)X=I_n$, i.e., the columns of $X$
are $(A^TA+B^TB)$-orthonormal.

The GSVD components $(\mathbf{0}_{l_1, q_1},I_{q_1},U_{l_1},V_{q_1},X_{q_1})$
and $(I_{q_2},\mathbf{0}_{l_2, q_2},U_{q_2},V_{l_2},X_{q_2})$
are associated with the zero and infinite generalized singular
values of $(A,B)$, called the {\em trivial} ones, and the columns of
$U_{l_1}$, $V_{l_2}$ and
$X_{q_1}$, $X_{q_2}$ form orthonormal and $(A^TA+B^TB)$-orthonormal
bases of $\N(A^T)$, $\N(B^T)$ and $\N(A)$, $\N(B)$, respectively.
Denote by $u_i,v_i$ and $x_i$ the $i$-th columns of
$U_q$, $V_q$ and $X_q$, respectively, $i=1,\dots,q.$
The quintuple $(\alpha_i,\beta_i,u_i,v_i,x_i)$ is called a {\em nontrival} GSVD
component of $(A,B)$ with the generalized singular value
$\sigma_{i}=\frac{\alpha_i}{\beta_i}$,
the left generalized singular vectors $u_i$, $v_i$ and the right
generalized singular vector $x_i$.
We also refer to a pair $(\alpha_i,\beta_i)$ as a
generalized singular value of $(A,B)$.

For a given target $\tau>0$, assume that the nontrivial generalized
singular values $\sigma_i,\,i=1,2,\ldots,q$ of $(A,B)$ are labeled as
\begin{equation}\label{gsvalue}
  |\sigma_1-\tau|\leq|\sigma_2-
  \tau|\leq\dots\leq |\sigma_{\ell}-\tau|<|\sigma_{\ell+1}-\tau|\leq\dots\leq|\sigma_q-\tau|.
\end{equation}
We are interested in computing the $\ell$ GSVD components
$(\alpha_i,\beta_i,u_i,v_i,x_i)$ corresponding to the
generalized singular values closest
to $\tau$. If $\tau$ is inside the spectrum of the nontrivial
generalized singular values of $(A,B)$,
then $(\alpha_i,\beta_i,u_i,v_i,x_i)$, $i=1,\dots,\ell$, are called
interior GSVD components of $(A,B)$; otherwise, they are called
the extreme, i.e., largest or smallest, ones.
A large number of GSVD components, i.e., $\ell\gg 1$,
some of which are interior ones, may be required in applications,
including nonlinear dimensionality reduction and data
science \cite{chuiwang2010,coifman2006,coifman2005}.
Without loss of generality, we always assume that
$\tau$ is not equal to any generalized singular value
of $(A,B)$.

Zha \cite{zha1996computing} proposes a joint bidiagonalization (JBD)
method for computing extreme GSVD components of the large
matrix pair $(A,B)$.
At each step of JBD, one needs to
solve an $(m+p)\times n$ least squares problem with
the coefficient matrix $\bsmallmatrix{A\\B}$,
which may be costly using an iterative solver \cite{jia2018joint}.
Hochstenbach \cite{hochstenbach2009jacobi} presents a Jacobi--Davidson
(JD) type GSVD (JDGSVD) method to compute $\ell$
GSVD components of $(A,B)$ with the full column rank $B$,
where, at each expansion step,
a correction equation of dimension $m+n$ needs to be solved iteratively with
low or modest accuracy; see \cite{huang2019inner,
jia2014inner,jia2015harmonic}.
The upper $m$-dimensional part and the lower $n$-dimensional part of
the approximate solution are used to expand
one of the left searching subspaces and the right searching subspace.
The JDGSVD method formulates the GSVD of
$(A,B)$ as the mathematically equivalent generalized eigendecomposition
of the augmented matrix pair
$\left(\bsmallmatrix{&A\\A^T&},\bsmallmatrix{I&\\&B^TB}\right)$
for the full column rank $B$ (resp.
$\left(\bsmallmatrix{&B\\B^T&},\bsmallmatrix{I&\\&A^TA}\right)$
for the full column rank $A$),
computes the relevant eigenpairs and recovers the approximate
GSVD components from the converged eigenpairs.

The side effects of involving the cross-product matrix $B^TB$ in
JDGSVD method are twofold.
First, in the extraction phase, one is required to compute
a $B^TB$-orthonormal basis of the right searching subspace,
which is numerically unstable when $B$ is ill conditioned.
Second, it is shown in \cite{huang2020choices} that
the error of the computed eigenvector is bounded by the size of
the perturbations times a multiple $\kappa(B^TB)=\kappa^2(B)$, where
$\kappa(B)=\sigma_{\max}(B)/\sigma_{\min}(B)$ denotes the $2$-norm
condition number of $B$ with $\sigma_{\max}(B)$ and $\sigma_{\min}(B)$
being the largest and smallest singular values of $B$, respectively.
Consequently, with an ill-conditioned $B$,
the computed GSVD components may have very poor accuracy and
has been numerically confirmed \cite{huang2020choices}.
We remark that all the eigenvalue-based type GSVD methods share this
shortcoming of JDGSVD. The results in \cite{huang2020choices} show
that if $B$ is ill conditioned but $A$ has full column rank
and is well conditioned then the JDGSVD method can be applied to the
matrix pair $(B,A)$ and compute the corresponding approximate GSVD
component with high accuracy \cite{huang2020choices}.
However, a reliable estimation of the condition numbers of $A$ and $B$
is numerically challenging and may be costly. As a result,
it is difficult to choose a proper formulation in applications.

Zwaan and Hochstenbach \cite{zwaan2017generalized} present
a generalized Davidson (GDGSVD) method and a multidirectional
(MDGSVD) method, which are designed to compute an {\em extreme} partial GSVD of $(A,B)$. The methods avoid any cross product or matrix-matrix product
by applying the standard extraction approach to $(A,B)$ directly
for computing approximate GSVD components with respect to the given left and right
searching subspaces with the two left subspaces formed by premultiplying
the right subspace with $A$ and $B$, respectively. At each iteration
of the GDGSVD method,
the right searching subspace is spanned by the residuals of the generalized
Davidson method \cite[Sec. 11.2.4 and Sec. 11.3.6]{bai2000} applied to
the generalized eigenvalue problem of $(A^TA,B^TB)$; in the MDGSVD method,
a truncation technique is designed
to discard an inferior search direction so as to improve the searching subspaces.
Exploiting the Kronecker canonical form of a regular matrix pair \cite{stewart90},
Zwaan \cite{zwaan2019} shows that the GSVD problem of $(A,B)$ can be formulated
as a certain $(2m+p+n)\times (2m+p+n)$ generalized eigenvalue
problem without using any cross product or any other matrix-matrix product.
Currently, such formulation is of major theoretical value because
the nontrivial eigenvalues and eigenvectors of the structured generalized eigenvalue problem
are always complex with the generalized eigenvalues being the conjugate quaternions
$(\sqrt{\sigma_j},-\sqrt{\sigma_j},\mathrm{i}\sqrt{\sigma_j},-\mathrm{i}
\sqrt{\sigma_j})$ with $\mathrm{i}$ the imaginary unit and
the associated right generalized eigenvectors being
\begin{eqnarray*}
&&[u_j^T,x_j^T/\beta_j,\sqrt{\sigma_j}u_j^T,\sqrt{\sigma_j}v_j^T]^T,\
[-u_j^T,-x_j^T/\beta_j,\sqrt{\sigma_j}u_j^T,\sqrt{\sigma_j}v_j^T]^T,
\\
&&[-\mathrm{i}u_j^T,\mathrm{i}x_j^T/\beta_j,\sqrt{\sigma_j}u_j^T,-\sqrt{\sigma_j}v_j^T]^T,\
 [\mathrm{i}u_j^T,\mathrm{i}x_j^T/\beta_j,-\sqrt{\sigma_j}u_j^T,-\sqrt{\sigma_j}v_j^T]^T.
\end{eqnarray*}
As is clear, the size of the generalized eigenvalue problem is
much bigger than that of the GSVD of $(A,B)$. It is also
unclear what the conditioning
of eigenvalues and eigenvectors of this problem is.
Furthermore, there has been no structure-preserving algorithm for
the complicated structured generalized eigenvalue problem
in \cite{zwaan2019}. It
will be extremely difficult and highly challenging to
seek for a numerically stable structure-preserving efficient algorithm for
that structured generalized eigenvalue problem.


In order to compute GSVD components accurately,
it is appealing to propose and develop algorithms that
work on $A$ and $B$ directly.
In this paper, we first propose a basic Cross-Product Free (CPF)
JD type method for computing one, i.e., $\ell=1$, GSVD component
of $(A,B)$, which is referred to as
CPF-JDGSVD in the sequel. As done in the GDGSVD and MDGSVD
methods \cite{zwaan2017generalized},
instead of constructing left and right searching subspaces
separately or independently, given a {\em right} searching subspace,
the CPF-JDGSVD method generates the corresponding two left searching
subspaces by acting $A$ and $B$ on the right subspace, respectively,
and constructs their orthonormal bases by computing two thin QR
factorizations of the matrices that are formed by premultiplying
the matrix consisting of the orthonormal basis vectors
of the right subspace with $A$
and $B$, respectively. But unlike \cite{zwaan2017generalized},
at the extraction stage, our method projects the GSVD of $(A,B)$
onto the left and right searching subspaces without involving
$A^TA$ and $B^TB$, and obtains an
approximation to the desired GSVD component of $(A,B)$ by computing
the GSVD of the small sized projection matrix pair.
To be practical, we develop a thick-restart CPF-JDGSVD algorithm with
deflation for computing several, i.e., $\ell>1$, GSVD components.

We shall, for the first time, give a theoretical justification that
the left searching subspaces are
as good as the right one as long as the desired generalized
singular value $\sigma$ is not very small or large, that is,
the distances of the desired left generalized singular vectors and
the left subspaces
are as small as that of the desired right generalized
singular vector and the right subspace.
We give a detailed derivation of certain new $n\times n$ correction
equations involved in CPF-JDGSVD, whose solutions are exploited to
expand right searching subspaces.
The correction equations are supposed to be approximately solved iteratively,
called inner iterations, and CPF-JDGSVD is an inner-outer iterative method
with extraction steps of approximate
GSVD components called outer iterations.
We establish a convergence result on the approximate generalized
singular values in terms of the residual norms.
Meanwhile, we derive some results on the inner iterations in CPF-JDGSVD
and obtain the asymptotic condition numbers of the correction equations.
Based on them and analysis,
we propose practical stopping criteria for the inner
iterations, making the computational cost of the inner iterations minimal
at each outer iteration and guarantee that CPF-JDGSVD behaves
like the exact CPF-JDGSVD where the correction equations are solved exactly.

The rest of this paper is organized as follows.
In Section~\ref{sec2}, we propose the CPF-JDGSVD method and present
some theoretical results on its rationale and convergence.
In Section~\ref{sec:4}, we derive correction equations and establish some
properties of them and some results on the inner iterations.
Based on these results, we design practical stopping criteria for the inner
iterations in CPF-JDGSVD.
In Section~\ref{sec:5}, we propose a CPF-JDGSVD algorithm with
restart and deflation for computing more than one GSVD components.
Numerical experiments are reported in Section~\ref{sec:7} to
demonstrate the performance of CPF-JDGSVD and
show its superiority to JDGSVD in \cite{hochstenbach2009jacobi}
in terms of efficiency and accuracy. We should remind that,
among the available algorithms, JDGSVD is the only available one
that can be used to make a comparison with CPF-JDGSVD
for computing the generalized
singular values closest to a given target $\tau$.
Finally, we conclude this paper in Section~\ref{sec:8}.

Throughout the paper, denote by $\|\cdot\|$ the $2$-norm of
a vector or matrix, and assume that $\|A\|$ and $\|B\|$ themselves are modest,
which can be achieved by suitable scaling.
Since the stacked matrix $\bsmallmatrix{A\\B}$ is better and can be much better
conditioned than both $A$ and $B$,
we will assume that $\bsmallmatrix{A\\B}$ is well conditioned,
which is definitely true, provided one of $A$ and $B$ is well conditioned.

\section{The basic CPF-JDGSVD algorithm}\label{sec2}

We propose a basic CPF-JDGSVD method for computing one
GSVD component $(\alpha,\beta,u,v,x):=(\alpha_1,\beta_1,u_1, v_1, x_1)$
of $(A,B)$
corresponding to the generalized singular value $\sigma:=\sigma_1$
closest to the target $\tau$.
The method includes three major
ingredients: (i) the construction of left searching subspaces for a given
right searching subspace,
(ii) an extraction approach of approximate GSVD components, and (iii) an
expansion approach of the right subspace.
We will prove that the accuracy of the left searching subspaces constructed
is similar to that of the right subspace,
and will establish an important convergence result on
the approximate generalized singular values.

\subsection{Extraction approach}\label{sec:2}

At iteration $k$, assume that a $k$-dimensional right searching
subspace $\mathcal{X}$ is available, from which we seek an
approximation to the right generalized singular vector $x$.
For the left generalized singular vectors $u$ and $v$, since
$Ax=\alpha u$ and $Bx=\beta v$, it is natural to construct
$\mathcal{U}:=A\mathcal{X}$ and $\mathcal{V}:=B\mathcal{X}$
as left searching subspaces and seek approximations
to $u$ and $v$ from them, respectively.

We shall present a theoretical justification that such $\mathcal{U}$ and
$\mathcal{V}$ contain the same accurate information on the desired $u$ and $v$
as $\mathcal{X}$ does on $x$.
As a result, it is possible to extract approximate left and right
generalized singular vectors with similar accuracy.

\begin{propositionmy}\label{thm:1}
Let $\X$ be a given right searching subspace
and $\mathcal{U}=A\X$ and $\mathcal{V}=B\X$ be
the left searching subspaces. Then for the desired
right and left generalized singular vectors $x$ and
$u$, $v$ of $(A,B)$ associated with the generalized singular
value $\sigma=\frac{\alpha}{\beta}$ it holds that
  \begin{eqnarray}
    \sin\angle(\UU,u)&\leq&
    \frac{\|A\|\|x\|}{\alpha}\sin\angle(\X,x),\label{sinUu} \\
    \sin\angle(\V,v)&\leq&
    \frac{\|B\|\|x\|}{\beta}\sin\angle(\X,x). \label{sinVv}
  \end{eqnarray}
\end{propositionmy}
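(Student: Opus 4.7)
The plan is to use the orthogonal projection of $x$ onto $\mathcal{X}$ to construct an explicit vector in $\mathcal{U}$ that approximates $u$, and then use the relation $Ax = \alpha u$ to translate distances in $\mathcal{X}$ into distances in $\mathcal{U} = A\mathcal{X}$. The argument for $v$ and $\mathcal{V} = B\mathcal{X}$ is strictly parallel, so I would work out the $u$-bound carefully and then only sketch the $v$-bound.

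First I would recall that, since $U$ is orthogonal in the GSVD \eqref{gsvd}, the column $u$ has unit norm, so $\sin\angle(\mathcal{U},u)$ equals the distance from $u$ to its nearest point in $\mathcal{U}$. Let $P_{\X}$ denote the orthogonal projector onto $\X$ and put $x_{\X} = P_{\X} x$, the best approximation to $x$ from $\X$. By the definition of the canonical angle, $\|x - x_{\X}\| = \|x\|\sin\angle(\X,x)$.

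Next I would exploit that $\mathcal{U} = A\mathcal{X}$: the vector $\tfrac{1}{\alpha} A x_{\X}$ lies in $\mathcal{U}$, so it is an admissible candidate. Using $Ax = \alpha u$, one has
\begin{equation*}
u - \frac{1}{\alpha} A x_{\X} = \frac{1}{\alpha}\bigl(Ax - A x_{\X}\bigr) = \frac{1}{\alpha} A (x - x_{\X}).
\end{equation*}
Taking norms and applying $\|A(x-x_{\X})\| \le \|A\|\|x-x_{\X}\|$, together with the optimality characterization of the projection, yields
\begin{equation*}
\sin\angle(\mathcal{U},u) \;\le\; \Bigl\| u - \frac{1}{\alpha}A x_{\X}\Bigr\| \;\le\; \frac{\|A\|}{\alpha}\,\|x-x_{\X}\| \;=\; \frac{\|A\|\|x\|}{\alpha}\sin\angle(\X,x),
\end{equation*}
which is precisely \eqref{sinUu}. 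Replacing $A,\alpha,u,\mathcal{U}$ by $B,\beta,v,\mathcal{V}$ gives \eqref{sinVv} verbatim.

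There is no real obstacle here: the identity $Ax=\alpha u$ and the containment $A x_{\X}\in \mathcal{U}$ do essentially all the work, and the only care needed is to remember that $u$ and $v$ are unit vectors (so that the distance bound directly gives the sine of the canonical angle), whereas $x$ is only $(A^TA+B^TB)$-normalized, which is why the factor $\|x\|$ must be kept explicit in the bounds.
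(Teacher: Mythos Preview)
Your proof is correct and follows essentially the same approach as the paper: both use $Ax=\alpha u$ to transport an approximant of $x$ from $\X$ into $\mathcal{U}$ and bound the resulting error by $\|A\|\|x-x_\X\|/\alpha$. The only cosmetic difference is that the paper parametrizes over an arbitrary $x'\in\X$ and works with the two-vector angle formula $\sin\angle(Ax',Ax)=\min_\mu\|Ax-\mu Ax'\|/\|Ax\|$ before minimizing over $x'$, whereas you go straight to the orthogonal projection $x_\X=P_\X x$ and the distance characterization $\sin\angle(\mathcal{U},u)=\min_{w\in\mathcal{U}}\|u-w\|$; this is slightly more direct, though the paper's intermediate inequality for pairs of vectors is later reused for Proposition~\ref{thm:1.1}.
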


\begin{proof}
For an arbitrary vector $x^{\prime}\in\X$,
by the definition of sine of the angle between arbitrary two nonzero vectors,
we have
\begin{eqnarray}
   \sin\angle(Ax^{\prime},Ax)
   &=&\min_{\mu}\frac{\|Ax-\mu Ax^{\prime}\|}{\|Ax\|}
   =\min_{\mu}\frac{\|A(x-\mu x^{\prime})\|}{\|Ax\|} \nonumber \\
   &\leq& \frac{\|A\|\|x\|}{\|Ax\|}\min_{\mu}
   \frac{\|x-\mu x^{\prime}\|}{\|x\|}  \nonumber\\
   &=&\frac{\|A\|\|x\|}{\alpha}\sin\angle(x^{\prime},x), \label{sinwithA}
\end{eqnarray}
where the last relation holds since $Ax=\alpha u$
with $\|u\|=1$. Therefore, we obtain
\begin{eqnarray*}
\sin\angle(\UU,u)&=&\sin\angle(A\X,Ax)
=\min_{x^{\prime}\in\X}\sin\angle(Ax^{\prime},Ax)\\
&\leq&\frac{\|A\|\|x\|}{\alpha}\min_{x^{\prime}\in\X}
\sin\angle(x^{\prime},x)\\
&=&\frac{\|A\|\|x\|}{\alpha}\sin\angle(\X,x),
\end{eqnarray*}
i.e., relation \eqref{sinUu} holds. \eqref{sinVv} can be proved
similarly.
\end{proof}

Proposition \ref{thm:1} shows that when $\X$ contains good
information on the desired $x$, the qualities of $\UU$ and
$\V$ are determined by $\alpha$, $\|A\|$, $\|x\|$, and by
$\beta$, $\|B\|$, $\|x\|$, respectively.
Since $\|X^{-1}\|^{-1}\leq\|x\|\leq\|X\|$, Theorem 2.3 in
\cite{hansen1989regularization} states that
\begin{equation}\label{normralat}
  \|X\|=\left\|\bsmallmatrix{A\\B}^{\dag}\right\|
  \quad\mbox{and}\quad
  \|X^{-1}\|=\left\|\bsmallmatrix{A\\B}\right\|,
\end{equation}
where $\dag$ denotes the More-Penrose generalized inverse.
Therefore, we have
\begin{equation}\label{boundx}
\left\|\bsmallmatrix{A\\B}\right\|^{-1}\leq \|x\|\leq
\left\|\bsmallmatrix{A\\B}^{\dag}\right\|.
\end{equation}
By the assumption that $\bsmallmatrix{A\\B}$ is well conditioned
and $\bsmallmatrix{A\\B}$ is scaled,
it is clear that $\|x\|$ is not large. Therefore, the qualities of
$\UU$ and $\V$ are similar to that of $\X$ provided that $\alpha$ and $\beta$
are not very small, i.e., $\sigma$ is not very small or large.
The proposition also shows that, for any $\sigma$, at least one of $\UU$ and $\V$
is as good as $\X$ as $\alpha$ and $\beta$ cannot be
small simultaneously.

Given $(A,B)$ and a right searching subspace $\X$, we now propose an
extraction approach that seeks an approximate generalized singular
value pair $(\tilde\alpha,\tilde\beta)$ with $\tilde\alpha^2+\tilde\beta^2=1$
and corresponding approximate
generalized singular vectors $\tilde u\in\mathcal{U}$, $\tilde v
\in\mathcal{V}$ with $\|\tilde u\|=\|\tilde v\|=1$ and
$\tilde x\in\mathcal{X}$ satisfying the
orthogonal projection:
\begin{equation}\label{galerkin}
  \left\{\begin{aligned}
  A\tilde x-\tilde\alpha\tilde u&=\bm{0}, \\
  B\tilde x-\tilde\beta\tilde v&=\bm{0}, \\
  \tilde\beta A^T\tilde u-\tilde\alpha B^T
  \tilde v&\perp\mathcal{X}.
  \end{aligned}\right.
  \end{equation}

Let the columns of $\widetilde X\in\mathbb{R}^{n\times k}$ form an
orthonormal basis of $\mathcal{X}$ and
\begin{equation}\label{qrAXBX}
A\widetilde X=\widetilde UG
\quad\mbox{and}\quad
B\widetilde X=\widetilde VH
\end{equation}
be thin QR factorizations of $A\widetilde X$ and $B\widetilde X$,
respectively, where $G\in\mathbb{R}^{k\times k}$
and $H\in\mathbb{R}^{k\times k}$ are upper triangular. Suppose that $G$ and $H$
are nonsingular.
Then the columns of $\widetilde U\in\mathbb{R}^{m\times k}$ and
$\widetilde V\in\mathbb{R}^{p\times k}$ form orthonormal bases
of $\mathcal{U}$ and $\mathcal{V}$, respectively.
Write $\tilde u=\widetilde Ue$, $\tilde v=\widetilde Vf$ and
$\tilde x=\widetilde Xd$.
Then \eqref{galerkin} is equivalent to
\begin{equation}\label{projected}
  \left\{\begin{aligned}
  Gd&=\tilde \alpha e,\\
  Hd&=\tilde \beta f, \\
  \tilde\beta G^Te &=\tilde\alpha H^Tf.
  \end{aligned}\right.
  \end{equation}
That is, $(\tilde\alpha,\tilde\beta)$ is a generalized singular value
of the $k\times k$ matrix pair $(G,H)$,
and $e$, $f$ and $d$ are the corresponding left and right generalized singular
vectors. We compute the GSVD of $(G,H)$, pick up
$\theta=\frac{\tilde \alpha}{\tilde \beta}$ closest to the target $\tau$, and
take
\begin{equation}\label{approxgsvd}
  (\tilde \alpha,\tilde \beta,
  \tilde u=\widetilde Ue,
  \tilde v=\widetilde Vf,
  \tilde x=\widetilde Xd)
\end{equation}
as an approximation to the desired GSVD component $(\alpha,\beta,u,v,x)$ of
$(A,B)$.

For the accuracy of the approximate left generalized singular vectors
$\tilde u$ and $\tilde v$, notice that $\tilde u$, $u$ and $\tilde v$, $v$ are collinear with $A\tilde x$, $Ax$ and $B\tilde x$, $Bx$, respectively.
Applying \eqref{sinwithA} to $(A\tilde x,Ax)$ and $(B\tilde x, Bx)$,
we obtain the following result.

\begin{propositionmy}\label{thm:1.1}
Let $\tilde u\in\mathcal{U}$, $\tilde v\in\V$ and
$\tilde x\in\X$ be the approximations to the generalized singular vectors $u$,
$v$ and $x$ of $(A,B)$ corresponding to the generalized singular value
$(\alpha,\beta)$ that satisfy \eqref{galerkin}. Then
\begin{eqnarray}
   \sin\angle(\tilde u,u)&\leq&
   \frac{\|A\|\|x\|}{\alpha}
   \sin\angle(\tilde x,x), \label{sinuwideu}\\
   \sin\angle(\tilde v,v)&\leq&
   \frac{\|B\|\|x\|}{\beta}
   \sin\angle(\tilde x,x).  \label{sinvwidev}
\end{eqnarray}
\end{propositionmy}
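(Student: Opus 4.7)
The plan is to recognize that this proposition is essentially an immediate corollary of the inequality \eqref{sinwithA} that was already established in the proof of Proposition~\ref{thm:1}, once we observe that the approximate left generalized singular vectors $\tilde u$ and $\tilde v$ are collinear with $A\tilde x$ and $B\tilde x$, respectively.

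First I would record the collinearity. From the first equation in \eqref{galerkin}, $A\tilde x = \tilde\alpha \tilde u$, and from $Ax=\alpha u$ we get that $\tilde u$ and $A\tilde x$ span the same line, while $u$ and $Ax$ span the same line (assuming $\tilde\alpha,\alpha \neq 0$, which is implicit under the standing assumption that $\sigma$ is not very small and in particular $\alpha \neq 0$). Hence
\begin{equation*}
\sin\angle(\tilde u, u) = \sin\angle(A\tilde x, Ax).
\end{equation*}
The analogous identity $\sin\angle(\tilde v, v) = \sin\angle(B\tilde x, Bx)$ follows from the second equation in \eqref{galerkin} together with $Bx=\beta v$.

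Next I would invoke inequality \eqref{sinwithA}, specialized to $x' = \tilde x \in \X$:
\begin{equation*}
\sin\angle(A\tilde x, Ax) \leq \frac{\|A\|\|x\|}{\alpha}\sin\angle(\tilde x, x).
\end{equation*}
Combining with the collinearity identity above yields \eqref{sinuwideu}. An entirely symmetric argument, using the $B$-analogue of \eqref{sinwithA} (derived in exactly the same manner with $\|Bx\|=\beta$ and $\|v\|=1$), gives \eqref{sinvwidev}.

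There is no genuine obstacle here; the only technical point worth noting is that the bound is phrased for individual vectors $\tilde x$ and $\tilde u$, $\tilde v$, rather than for the full subspaces $\X$, $\UU$, $\V$ as in Proposition~\ref{thm:1}. This is why the minimum over $x'\in\X$ is replaced by the specific choice $x'=\tilde x$, and why no projection step is needed: the Galerkin conditions force $\tilde u$ and $\tilde v$ to be precisely the images of $\tilde x$ (up to normalization) under $A$ and $B$, so the inequality carries over pointwise. The estimate is thus essentially free once \eqref{sinwithA} is in hand.
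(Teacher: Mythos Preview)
Your proof is correct and is essentially identical to the paper's own argument: the paper also notes that $\tilde u,u$ and $\tilde v,v$ are collinear with $A\tilde x,Ax$ and $B\tilde x,Bx$, respectively, and then applies \eqref{sinwithA} directly to obtain the result.
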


Proposition~\ref{thm:1.1} indicates that, with the left researching
subspaces $\UU=A\X$ and $\V=B\X$,
our extraction approach \eqref{galerkin} can indeed obtain
the approximate left and right generalized
singular vectors $\tilde u$, $\tilde v$ and $\tilde x$ with similar
accuracy if $\bsmallmatrix{A\\B}$ is well conditioned
and both $\alpha$ and $\beta$ are not very small. Moreover, for any
$\sigma$, at least one of $\tilde u$ and $\tilde v$ is
as accurate as $\tilde x$ as at most one of $\alpha$ and $\beta$
can be small.

It is easily justified that the extraction approach \eqref{galerkin}
mathematically amounts to realizing the standard
orthogonal projection, i.e., the standard Rayleigh--Ritz approximation,
of the regular matrix pair $(A^TA,B^TB)$ onto $\X$. It
extracts the Ritz vector $\tilde x\in\X$ associated with the Ritz value
$\theta^2=\tilde\alpha^2/\tilde\beta^2$ closest to $\tau^2$
and computes $\tilde\alpha$, $\tilde\beta$ and
$\tilde u$, $\tilde v$ satisfying \eqref{galerkin}.
However, we do not form $A^TA$ and $B^TB$ explicitly and
thus avoid the potential accuracy loss of the computed GSVD components.

By \eqref{galerkin},
$(\tilde\alpha,\tilde\beta,\tilde u,\tilde v,\tilde x)$ as an
approximate GSVD component of $(A,B)$ satisfies
$A\tilde x=\tilde\alpha\tilde u$ and $B\tilde x=\tilde\beta\tilde v$,
which lead to $\tilde\alpha=\tilde u^TA\tilde x$ and $\tilde\beta=\tilde v^TB\tilde
x$. Therefore, the (absolute) residual of $(\tilde\alpha,\tilde\beta,\tilde u,\tilde
v,\tilde x)$ is
\begin{equation}\label{residual}
   r=r(\tilde\alpha,\tilde\beta,
   \tilde u,\tilde v,\tilde x)
   :=\tilde\beta A^T\tilde u-
   \tilde\alpha B^T\tilde v.
\end{equation}
It is easily seen that $r=\bm{0}$ if and only if
$(\tilde\alpha,\tilde\beta,\tilde u,\tilde v,\tilde x)$
is an exact GSVD component of $(A,B)$.
We always have
\begin{equation*}
  \|r\|\leq\tilde\beta\|A\|+\tilde\alpha\|B\|,
\end{equation*}
meaning that $\|r\|$ is never large for the scaled $\|A\|$ and $\|B\|$.
In practical computations, for a prescribed tolerance $tol>0$,
$(\tilde \alpha,\tilde \beta,\tilde u,
\tilde v,\tilde x)$ is claimed to have converged if
\begin{equation}\label{outstopping}
  \|r\|\leq
  (\tilde \beta\|A\|_1+\tilde \alpha\|B\|_1)\cdot tol,
\end{equation}
where $\|\cdot\|_1$ denotes the 1-norm of a matrix.

In the following, we present one of the main results, which, in terms of $\|r\|$,
gives the accuracy estimate of the approximate generalized singular value
$\theta=\frac{\tilde{\alpha}}{\tilde{\beta}}$.
To this end and also for our later use, introduce the function
\begin{equation}\label{defineh}
h(\theta,\varsigma)=\frac{\varsigma^2-\theta^2}{1+\varsigma^2}
\qquad\mbox{for}\quad
\theta\geq0
\quad\mbox{and}\quad
\varsigma\geq0.
\end{equation}
By $\alpha_i^2+\beta_i^2=1$ and
$1+\sigma_i^2=\frac{\beta_i^2+\alpha_i^2}{\beta_i^2}=\frac{1}{\beta_i^2}$,
we have
\begin{equation}\label{equiveq}
\alpha_i^2-\beta_i^2 \theta^2=(\sigma_i^2-\theta^2)\beta_i^2
=h(\theta,\sigma_i)
\qquad\mbox{for}\qquad i=1,\dots,q.
\end{equation}

\begin{theoremmy}\label{thm:2}
  Let $(\tilde\alpha,\tilde\beta,\tilde u,
  \tilde v,\tilde x)$ be an approximate GSVD
  component of $(A,B)$ satisfying \eqref{galerkin}
  with $\theta=\frac{\tilde\alpha}{\tilde\beta}$
and $r$ be the corresponding residual defined by \eqref{residual}.
Then the following results hold:
(i) If
\begin{equation}\label{assump1}
\frac{\sigma_{\min}}{\sqrt{2+\sigma_{\min}^2}}<\theta<\sqrt{1+2\sigma_{\max}^2}
\end{equation}
with $\sigma_{\max}$ and $\sigma_{\min}$ the largest and smallest nontrivial
generalized singular values of $(A,B)$, respectively, then there exists a nontrivial
generalized singular value $\sigma$ of $(A,B)$ such that
  \begin{equation}\label{convergence}
    \frac{|\sigma^2-\theta^2|}{(1+\sigma^2)\theta}
    \leq\frac{\|X\|^2\|r\|}{\|\tilde x\|};
  \end{equation}
(ii) if $\theta\geq \sqrt{1+2\sigma_{\max}^2}$, then
\begin{equation}\label{maxs}
\frac{1}{\theta}\leq\frac{\|X\|^2\|r\|}{\|\tilde x\|};
\end{equation}
(iii) if $\theta\leq \frac{\sigma_{\min}}{\sqrt{2+\sigma_{\min}^2}}$, then
\begin{equation}\label{mins}
\theta\leq\frac{\|X\|^2\|r\|}{\|\tilde x\|}.
\end{equation}
\end{theoremmy}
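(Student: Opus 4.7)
The plan is to rewrite $r$ as a single diagonal expression via the GSVD of $(A,B)$, obtain a two-sided bound on $\|r\|$ controlled by the smallest diagonal entry, and then use an elementary analysis of $h(\theta,\sigma)$ to identify which block realises that minimum in each of the three regimes for $\theta$.

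First, substituting $\tilde u=A\tilde x/\tilde\alpha$ and $\tilde v=B\tilde x/\tilde\beta$ from \eqref{galerkin} into \eqref{residual} and invoking \eqref{gsvd} to diagonalise $A^TA$ and $B^TB$, I obtain
\[
r=\frac{1}{\tilde\alpha\tilde\beta}X^{-T}DX^{-1}\tilde x,\qquad D:=\tilde\beta^2\Sigma_A^T\Sigma_A-\tilde\alpha^2\Sigma_B^T\Sigma_B.
\]
Using $\tilde\alpha^2+\tilde\beta^2=1$ and $\tilde\alpha/\tilde\beta=\theta$ gives $\tilde\alpha\tilde\beta=\theta/(1+\theta^2)$, and \eqref{equiveq} identifies the diagonal entries of $D$ as $h(\theta,\sigma_i)/(1+\theta^2)$ on the nontrivial block, $-\theta^2/(1+\theta^2)$ on the $q_1$ zero block, and $1/(1+\theta^2)$ on the $q_2$ infinite block. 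Combining $\|X^{-T}w\|\ge\|w\|/\|X\|$, $\|X^{-1}\tilde x\|\ge\|\tilde x\|/\|X\|$, and $\|Dy\|\ge\min_i|D_{ii}|\cdot\|y\|$ then yields the master inequality
\[
(1+\theta^2)\min_i|D_{ii}|\le\frac{\theta\,\|X\|^2\|r\|}{\|\tilde x\|},
\]
which reduces the theorem to bounding $\min_i|D_{ii}|$ from below in each regime.

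A short calculation shows that $|h(\theta,\sigma)|\le1\Leftrightarrow\theta^2\le1+2\sigma^2$ and $|h(\theta,\sigma)|\le\theta^2\Leftrightarrow\theta^2\ge\sigma^2/(2+\sigma^2)$, and that $\sigma\mapsto|h(\theta,\sigma)|$ is decreasing on $[0,\theta]$ and increasing on $[\theta,\infty)$. In case (i), hypothesis \eqref{assump1} therefore gives $|h(\theta,\sigma_{\max})|\le1$ and $|h(\theta,\sigma_{\min})|\le\theta^2$, so $\min_i|h(\theta,\sigma_i)|\le\min(\theta^2,1)$ and the nontrivial block beats both trivial ones; picking $\sigma$ as the minimiser and substituting $|h(\theta,\sigma)|=|\sigma^2-\theta^2|/(1+\sigma^2)$ into the master inequality produces \eqref{convergence}. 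In case (ii), every nontrivial $\sigma_i$ satisfies $\theta^2\ge1+2\sigma_i^2$, so $|h(\theta,\sigma_i)|\ge1$, and $\theta^2\ge1$ makes the zero-block entry at least $1/(1+\theta^2)$; hence $\min_i|D_{ii}|\ge1/(1+\theta^2)$ and \eqref{maxs} follows. Case (iii) is dual: monotonicity of $\sigma^2/(2+\sigma^2)$ in $\sigma$ gives $|h(\theta,\sigma_i)|\ge\theta^2$ for every nontrivial $\sigma_i$, and $\theta^2\le1$ forces $\min_i|D_{ii}|\ge\theta^2/(1+\theta^2)$, producing \eqref{mins}.

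The main obstacle is the bookkeeping in the last paragraph: the thresholds in \eqref{assump1} are exactly the switch-over points between which block of $D$ attains the minimum, and one has to check carefully that the trivial $q_1$- and $q_2$-blocks neither spoil case (i) nor give a weaker lower bound than claimed in cases (ii) and (iii). Once those scalar inequalities on $h(\theta,\sigma)$ are pinned down, every remaining estimate is routine.
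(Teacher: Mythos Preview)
Your proof is correct and follows essentially the same route as the paper: the paper premultiplies the relation $\theta r=(A^TA-\theta^2B^TB)\tilde x$ by $X^T$ to obtain exactly your master inequality $\min\{1,\theta^2,\min_i|h(\theta,\sigma_i)|\}\le \theta\|X\|^2\|r\|/\|\tilde x\|$, and then performs the same three-case analysis of which term realises the minimum. Your packaging via $r=\tfrac{1}{\tilde\alpha\tilde\beta}X^{-T}DX^{-1}\tilde x$ and the crisp equivalences $|h(\theta,\sigma)|\le1\Leftrightarrow\theta^2\le1+2\sigma^2$, $|h(\theta,\sigma)|\le\theta^2\Leftrightarrow\theta^2\ge\sigma^2/(2+\sigma^2)$ is a slightly cleaner presentation of the same computation.
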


\begin{proof}
By definition \eqref{residual} and
$A\tilde x=\tilde\alpha\tilde u$ and $B\tilde x=\tilde\beta\tilde v$, we have
  \begin{equation}\label{relatrx}
    \theta r=\tilde\alpha A^T\tilde u-
    \tfrac{\tilde\alpha^2}{\tilde\beta}B^T\tilde v
    =(A^TA-\theta^2B^TB)\tilde x.
  \end{equation}
  Premultiplying the two hand sides of the above by $X^T$ and exploiting
  \eqref{gsvd}, we obtain
  \begin{equation*}
  \theta X^Tr
  =X^T(A^TA-\theta^2B^TB)XX^{-1}\tilde x
  =\diag\{C^2-\theta^2S^2,-\theta^2I_{q_1},I_{q_2} \}X^{-1}\tilde x.
  \end{equation*}
Taking norms on the above two hand sides and exploiting
\eqref{equiveq} give
  \begin{eqnarray}
    \theta\| X^Tr\|&\geq&\min \{1,\theta^2,\min_{i=1,\ldots,q}|\alpha_i^2-
    \beta_i^2\theta^2|\}\|X^{-1}\tilde x\|\nonumber\\
    &\geq&\frac{\|\tilde x\|}{\|X\|}\min
    \{1,\theta^2,\min_{i=1,\ldots,q}|h(\theta,\sigma_i)|\} \label{inserteq}.
  \end{eqnarray}

By \eqref{defineh}, for $\theta<\sigma_{\max}$ and
$\sigma_{\max}\leq \theta<\sqrt{1+2\sigma_{\max}^2}$,
we have
\begin{eqnarray*}
|h(\theta,\sigma_{\max})|&=&h(\theta,\sigma_{\max})=
\frac{\sigma_{\max}^2-\theta^2}{1+\sigma_{\max}^2}<1,\\
|h(\theta,\sigma_{\max})|&=&-h(\theta,\sigma_{\max})=
\frac{\theta^2-\sigma_{\max}^2}{1+\sigma_{\max}^2}<1,
\end{eqnarray*}
respectively, proving that $\min\limits_{i=1,2,\ldots,q}|h(\theta,\sigma_i)|<1$.
For $\theta>\sigma_{\min}$ and
$\frac{\sigma_{\min}}{\sqrt{2+\sigma_{\min}^2}}<\theta\leq\sigma_{\min}$,
we obtain
\begin{eqnarray*}
|h(\theta,\sigma_{\min})|&=&-h(\theta,\sigma_{\min})=
\frac{\theta^2-\sigma_{\min}^2}{1+\sigma_{\min}^2}<\theta^2,\\
|h(\theta,\sigma_{\min})|&=&h(\theta,\sigma_{\min})=
\frac{\sigma_{\min}^2-\theta^2}{1+\sigma_{\min}^2}<\theta^2,
\end{eqnarray*}
respectively, proving that  $\min\limits_{i=1,2,\ldots,q}|h(\theta,\sigma_i)|<\theta^2$.
Therefore, under condition \eqref{assump1}, we have
\begin{eqnarray*}
\min\{1,\theta^2,\min_{i=1,\ldots,q}|h(\theta,\sigma_i)|\}
&=&\min_{i=1,\ldots,q}|h(\theta,\sigma_i)|=|h(\theta,\sigma)|,
\end{eqnarray*}
which, together with \eqref{inserteq}, proves \eqref{convergence}.

It is straightforward to
justify that, under the conditions in (ii) and (iii),
\begin{eqnarray*}
\min\{1,\theta^2,\min_{i=1,\ldots,q}|h(\theta,\sigma_i)|\}
&=&1,\\
\min\{1,\theta^2,\min_{i=1,\ldots,q}|h(\theta,\sigma_i)|\}&=&\theta^2,
\end{eqnarray*}
respectively. Therefore, from \eqref{inserteq} we obtain
\eqref{maxs} and \eqref{mins}.
\end{proof}

By assumption and \eqref{normralat}, $\|X\|$ is modest.
From \eqref{qrAXBX} and the orthonormality of $\widetilde{X}$,
it is easily justified that
\begin{equation}\label{boundproj}
\left\|\bsmallmatrix{G\\H}\right\|\leq
\left\|\bsmallmatrix{A\\B}\right\|,\ \ \left\|\bsmallmatrix{G\\H}^{\dag}
\right\|\leq\left\|\bsmallmatrix{A\\B}^{\dag}\right\|.
\end{equation}
Exploiting Lemma 2.4 of \cite{huang2020choices}
and adapting \eqref{normralat} and \eqref{boundx} to $(G,H)$,
for $d$ defined by \eqref{projected} we obtain
\begin{equation}\label{boundd}
\left\|\bsmallmatrix{G\\H}\right\|^{-1}\leq \|d\|\leq
\left\|\bsmallmatrix{G\\H}^{\dag}\right\|.
\end{equation}
Therefore, from \eqref{normralat}, \eqref{boundproj}, \eqref{boundd}
and $\|\tilde x\|=\|\tilde X d\|=\|d\|$ we have
$$
\frac{\|X\|^2}{\|\tilde x\|}\leq \left\|\bsmallmatrix{A\\B}^{\dag}\right\|
\kappa\left(\bsmallmatrix{A\\B}\right),
$$
which is modest when $\bsmallmatrix{A\\B}$
is scaled and well conditioned. Relations~\eqref{maxs}
and \eqref{mins} show that if $\theta$ is significantly
bigger than $\sigma_{\max}$ or smaller than $\sigma_{\min}$ then it
converges to the trivial zero or infinite generalized
singular value as $\|r\|$ tends to zero.

For the scaled matrix pair $(\gamma A,\gamma B)$ with
the constant $\gamma>0$, the approximate GSVD components
becomes $(\tilde\alpha,\tilde\beta,\tilde u,\tilde v,
\frac{1}{\gamma}\tilde x)$, the residual is $\gamma r$, and
the right generalized singular vector matrix of
$(\gamma A,\gamma B)$ is $\frac{1}{\gamma}X$.
Inserting them into \eqref{convergence}--\eqref{mins},
we obtain the same results. These indicate that
$\frac{\|X\|^2\|r\|}{\|\tilde x\|}$ in the right-hand
sides of \eqref{convergence}--\eqref{mins} is invariant under the scaling
of $(A,B)$.
Notice that $\|r\|$ is a backward error
and the left-hand side of \eqref{convergence}
is a forward error of $\sigma$. It is instructive to regard the factor
$\frac{\|X\|^2}{\|\tilde x\|}$ as a condition
number of $\sigma$ when bounding the error of
$\sigma$ in terms of the residual norm $\|r\|$.
Moreover, if $\theta\approx 0$ but $\theta> \frac{\sigma_{\min}}{\sqrt{2+\sigma_{\min}^2}}$ and
$\theta>1$ but $\theta<\sqrt{1+2\sigma_{\max}^2}$, from \eqref{convergence}
we approximately have the absolute errors $|\sigma-\theta|\lesssim\frac{\|X\|^2}
{2\|\tilde x\|}\|r\|$ and
$|\sigma-\theta|\lesssim\frac{\theta^2\|X\|^2}{2\|\tilde x\|}\|r\|$, respectively. Therefore, \eqref{convergence} indicates
that $|\sigma-\theta|=\mathcal{O}(\|r\|)$ with a
generic constant in the big $\mathcal{O}(\cdot)$.

\subsection{Subspace expansion}\label{sec:3}

If the current GSVD approximation $(\tilde\alpha,\tilde\beta,
\tilde u,\tilde v,\tilde x)$ does not yet converge, one needs to
expand the searching subspaces $\X,\mathcal{U},\V$ in order to extract
a more accurate approximate GSVD component with respect to them.
Since we construct the left searching subspaces by $\mathcal{U}=A\X$ and
$\mathcal{V}=B\X$, we only need to expand  $\mathcal{X}$
effectively and then generate $\mathcal{U}=A\X$
and $\mathcal{V}=B\X$ correspondingly.

Keep in mind that $(\sigma^2,x)$ is an eigenpair of
the matrix pair $(A^TA,B^TB)$ with $\sigma=\alpha/\beta$. Suppose that
an approximate right generalized singular vector
$\tilde x\in \X$ is available. We aim to seek a
correction vector $t$ satisfying
\begin{equation}\label{require}
   t\perp \tilde y:=(A^TA+B^TB)\tilde x
   =\tilde\alpha A^T\tilde u+\tilde\beta B^T\tilde v,
\end{equation}
such that $\tilde x+t$ is an unnormalized right generalized singular
vector of $(A,B)$. Therefore, $(\sigma^2,\tilde x+t)$ is an
exact eigenpair of $(A^TA,B^TB)$:
\begin{equation}\label{innerhope}
  A^TA(\tilde x+t)=\sigma^2 B^TB(\tilde x+t).
\end{equation}
Rearranging this equation, we obtain
\begin{equation}\label{eqinnerhope}
  (A^TA-\theta^2 B^TB)t=
  -(A^TA-\theta^2 B^TB)\tilde x
  +(\sigma^2-\theta^2)B^TB\tilde x
  +(\sigma^2-\theta^2)B^TBt,
\end{equation}
where $\theta=\frac{\tilde\alpha}{\tilde\beta}$
is the current approximate generalized singular value.

Assume that $\tilde x$ is already reasonably accurate with the normalization
$\tilde x^T (A^TA+B^TB)\tilde x=1$, which means that
$\|t\|$ is small relative to $\|\tilde x\|$.
In this case, $\theta$ is an approximation to $\sigma$ with the error
$\mathcal{O}(\|t\|)$ because
\begin{equation}\label{errorsig}
   \sigma^2=\frac{\|A(\tilde x+t)\|^2}{\|B(\tilde x+t)\|^2}
   =\frac{\tilde\alpha^2+2\tilde\alpha\tilde u^TAt +\|At\|^2}
   {\tilde\beta^2+2\tilde\beta\tilde v^TBt+\|Bt\|^2}
  =\theta^2(1+\mathcal{O}(\|t\|)),
\end{equation}
indicating that the size of the third term in the right-hand side of
\eqref{eqinnerhope} is $\OO(\|t\|^2)$.

Note from \eqref{relatrx} that the first term in the right-hand
side of \eqref{eqinnerhope} is collinear with  the residual $r$ of
$(\tilde\alpha,\tilde\beta,\tilde u,\tilde v,\tilde x)$,
which is orthogonal to $\mathcal{X}$,
as indicated by the third condition in \eqref{galerkin}.
Therefore, the first term in the right-hand side of
\eqref{eqinnerhope} is orthogonal to $\tilde x\in\mathcal{X}$.
Moreover, we know from \eqref{galerkin} and
\eqref{require} that $\tilde y^T\tilde x=1$
and $(I-\tilde y\tilde x^T)$ is an oblique projector onto
the orthogonal complement $\tilde x^{\perp}$ of $span\{\tilde x\}$.
Neglecting the third term $\OO(\|t\|^2)$ in the right-hand side
of \eqref{eqinnerhope}, we obtain
\begin{equation}\label{precorrection1}
   \left(I-\tilde y\tilde x^T\right)(A^TA-\theta^2 B^TB)t
   =-\theta r+(\theta^2-\sigma^2)(I-\tilde y\tilde x^T)B^TB\tilde x.
\end{equation}
From \eqref{require} and \eqref{innerhope}, we have
\begin{eqnarray*}
(I-\tilde y\tilde x^T)B^TB\tilde x
&=&B^TB\tilde x-(\tilde xB^TB\tilde x)\tilde y
=B^TB\tilde x-\tilde\beta^2\tilde y \\
&=&\tilde\alpha^2B^TB\tilde x-\tilde\beta^2A^TA\tilde x
=\frac{\theta^2B^TB\tilde x-A^TA\tilde x}{1+\theta^2}\\
&=&\frac{(\theta^2-\sigma^2)B^TB\tilde x}{1+\theta^2}
+\frac{(A^TA-\sigma^2B^TB)t}{1+\theta^2}=\OO(\|t\|),
\end{eqnarray*}
which, together with \eqref{errorsig}, proves
that the second term in the right-hand side
of \eqref{precorrection1} is the higher order small $\OO(\|t\|^2)$
relative to the left-hand side of \eqref{precorrection1} and thus the first term of the right-hand side of \eqref{precorrection1}.
Neglecting the second term in the right-hand side of \eqref{precorrection1}, we obtain
\begin{equation}\label{precorrection}
   \left(I-\tilde y\tilde x^T\right)(A^TA-\theta^2 B^TB)t=-\theta r
   \quad\mbox{with}\quad t\perp \tilde y.
\end{equation}

The requirement $t\perp \tilde y$ means
$t=(I-\tilde x\tilde y^T)t$.
Therefore, we can replace $t$ with $(I-\tilde x\tilde y^T)t$ in
\eqref{precorrection}. Notice that
it is the direction other than the size of $t$ that matters
when expanding $\mathcal{X}$ by adding $t$ to it. Therefore,
it makes no difference when solving \eqref{precorrection}
with the right-hand side $-\theta r$ or $-r$.
As a consequence, we have ultimately derived an correction equation
\begin{equation}\label{correction}
  (I-\tilde y\tilde x^T)(A^TA-\theta^2 B^TB)(I-\tilde x\tilde y^T)t=-r
 \quad \mbox{ with} \quad
 t\perp\tilde y.
\end{equation}
Solving it for $t$ and orthonormalizing $t$ against $\widetilde X$ yields the
subspace expansion vector
$x_{+}=\frac{(I-\widetilde X\widetilde X^T)t}
{\|(I-\widetilde X\widetilde X^T)t\|}$.
The $k+1$ columns of the updated
$\widetilde X:=[\widetilde X,x_{+}]$ form an
orthonormal basis of the expanded $(k+1)$-dimensional right searching subspace
$\X:=\X+{\rm span}\{x_+\}$.

The coefficient matrix in \eqref{correction} dynamically depends
on $\theta$ as the outer iterations proceed. In practical computations,
it is typical that $\theta$ may have little accuracy as approximations to $\sigma$ in an initial stage, so that solving \eqref{correction} with varying $\theta$ may not gain. To this end, a better way is to solve the correction equation
\eqref{correction} with $\theta$ replaced by the fixed target $\tau$ in the left-hand side:
\begin{equation}\label{correction2}
(I-\tilde y\tilde x^T)(A^TA-\tau^2 B^TB)(I-\tilde x\tilde y^T)t=-r
\quad\mbox{with}\quad t\perp\tilde y
\end{equation}
in the initial stage and then switch to solving \eqref{correction} when
$\|r\|$ becomes fairly small,
i.e., $\theta$ has already some accuracy.
Approximately solving this equation or \eqref{correction} iteratively is called
the inner iterations in CPF-JDGSVD. In computations, if
\begin{equation}\label{fixtarget}
  \|r\|\leq(\tilde\beta\|A\|_1+
  \tilde\alpha\|B\|_1)\cdot fixtol
\end{equation}
with $fixtol$ fairly small but bigger than
the stopping tolerance $tol$ of outer iterations, we then
switch to solving \eqref{correction}.

\section{Properties of the correction equations and stopping criteria
for the inner iterations}\label{sec:4}

For the large $A$ and $B$, suppose that only iterative
solvers are computationally viable to solve the correction
equations approximately.
Since the coefficient matrices in \eqref{correction} and
\eqref{correction2} are symmetric and typically indefinite,
the minimal residual method (MINRES) is a most commonly used
choice \cite{greenbaum,saad2003}.
We establish upper bounds for the condition
numbers of the correction equations \eqref{correction} and \eqref{correction2}
when $\theta=\sigma$ and $\tilde{x}=x$. Meanwhile,
we make an analysis on the solution accuracy requirement of
the correction equations for practical use.
Based on them, we propose practical stopping criteria for the
inner iterations.
We focus on \eqref{correction}, and, as it will turn out,
the results are directly applicable to \eqref{correction2}.

\subsection{Conditioning}\label{subsec:1}
The coefficient matrix in the correction equation \eqref{correction}
maps the orthogonal complement $\tilde y^{\perp}$ of
$span\{\tilde y\}$ to $\tilde x^{\perp}$, and
it is restricted to $\tilde y^{\perp}$ and
generates elements in $\tilde x^{\perp}$.
We denote this restricted linear operator by
\begin{equation}\label{defM}
    M=(A^TA-\theta^2 B^TB)
    |_{\tilde y^{\perp}\rightarrow \tilde x^{\perp}}.
  \end{equation}
As will be clear, the condition number $\kappa(M)$ determines the reliability of
adopting the relative residual norm of the correction equation
as the measurement of inner iteration accuracy.
As a result, it is significant to derive sharp estimates for $\kappa(M)$.
However, it is generally not possible to do so for
a general approximation $\tilde x$.
Fortunately, sharp estimates for the ideal case that $\theta=\sigma$ and
$\tilde x=x$ suffice since,
by a continuity argument, they will exhibit the
asymptotic behavior of $\kappa(M)$ when $\theta\rightarrow \sigma$
and $\tilde x\rightarrow x$.

Based on the GSVD \eqref{gsvd} of $(A,B)$ and \eqref{gsvalue}, we partition
\begin{equation}\label{partitionCSX}
  U=[u,U_2],\quad
  V=[v,V_2],\quad
  X=[x,X_2],\quad
  \Sigma_{A}=\bsmallmatrix{\alpha&\\&\Sigma_{A,2}},\quad
  \Sigma_{B}=\bsmallmatrix{\beta&\\&\Sigma_{B,2}},
\end{equation}
where the matrices
\begin{equation}\label{SigmaA2B2}
\Sigma_{A,2}=\diag\{C_2,\bm{0}_{l_1,q_1},I_{q_2}\}
\quad\mbox{and}\quad
\Sigma_{B,2}=\diag\{S_2,I_{q_1},\bm{0}_{l_2,q_2}\}
\end{equation}
with $C_2=\diag\{\alpha_2,\dots,\alpha_q\}$ and
$S_2=\diag\{\beta_2,\dots,\beta_q\}$.
From $X^T(A^TA+B^TB)X=I$, we obtain
\begin{equation}\label{partitionY}
  Y=X^{-T}=(A^TA+B^TB)X=[y,Y_2]
\end{equation}
with $y=(A^TA+B^TB)x$ and $Y_2=(A^TA+B^TB)X_2$.
Then $X_2$ and $Y_2$ are orthogonal to $y$ and $x$, respectively,
i.e., $X_2^Ty=\bm{0}$ and $Y_2^Tx=\bm{0}$, and
the columns of $Y_2$ form a basis of
the orthogonal complement $x^{\perp}$ of $span\{x\}$ with respect to $\mathbb{R}^n$.
Let
\begin{equation}\label{qrX2W2}
  Y_2=Q_{y}R_{y}
\end{equation}
be the thin QR factorization of $Y_2$.
Then the columns of $Q_{y}$ form an orthonormal basis of $x^{\perp}$.
It is obvious from \eqref{require} and \eqref{partitionY} that $\tilde y=y$ when
$\tilde x=x$.

With the above preparation and notations, we can present the following results.

\begin{theoremmy}\label{thm:3}
Set $\tilde x=x$ and $\tilde y=y$ in \eqref{defM},
and assume that $\sigma$ is a simple
nontrivial generalized singular value of $(A,B)$.
Then
\begin{equation}\label{expressionM}
M^{\prime}= X_2^T(A^TA-\sigma^2 B^TB)X_2=\Sigma_{A,2}^T\Sigma_{A,2}-
\sigma^2\Sigma_{B,2}^T\Sigma_{B,2}
\end{equation}
is nonsingular, where $X_2$ and $\Sigma_{A,2}$,  $\Sigma_{B,2}$ are defined by \eqref{partitionCSX} and \eqref{SigmaA2B2}, respectively.
Furthermore, it holds that
 \begin{equation}\label{kappaM}
 \kappa(M)=\kappa(R_y M^{\prime}R_y^T)\leq\kappa^2
    \left(\begin{bmatrix}\begin{smallmatrix}A\\B
\end{smallmatrix}\end{bmatrix}\right)
\max\left\{\frac{\max\{1,\sigma^2\}(1+\sigma_{*}^2)}
{|\sigma_{*}^2-\sigma^2|},\frac{\max\{1,\sigma^2\}}{\min\{1,\sigma^2\}}\right\},
\end{equation}
where $R_y$ is as defined in \eqref{qrX2W2} and
$\sigma_{*}$ is the minimizer of
$\min_{i=2,3,\ldots,q}|\alpha_i^2-\beta_i^2\sigma^2|
=\min_{i=2,3,\ldots,q}\frac{|\sigma_i^2-\sigma^2|}{1+\sigma_i^2}$.
\end{theoremmy}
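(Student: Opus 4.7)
The plan has three pieces: (i) derive the explicit formula for $M'$ and check nonsingularity, (ii) justify the equality $\kappa(M)=\kappa(R_y M'R_y^T)$ by producing the closed-form factorization $A^TA-\sigma^2 B^TB = Y_2 M' Y_2^T$ and recognizing $M$ as a self-adjoint restriction, and (iii) estimate $\kappa(R_y M' R_y^T)$ via submultiplicativity and Weyl interlacing.

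For (i), I would square the GSVD identities $U^TAX=\Sigma_A$ and $V^TBX=\Sigma_B$ into $X^TA^TAX=\Sigma_A^T\Sigma_A$ and $X^TB^TBX=\Sigma_B^T\Sigma_B$, partition $X=[x,X_2]$ as in \eqref{partitionCSX}, and read the $(2,2)$ blocks to recover $M'=\Sigma_{A,2}^T\Sigma_{A,2}-\sigma^2\Sigma_{B,2}^T\Sigma_{B,2}$. By \eqref{SigmaA2B2} and \eqref{equiveq}, $M'$ is diagonal with entries $\alpha_i^2-\sigma^2\beta_i^2=h(\sigma,\sigma_i)$ for $i=2,\ldots,q$ (nonzero by the simplicity of $\sigma$), together with $-\sigma^2$ of multiplicity $q_1$ (nonzero since $\sigma$ is nontrivial) and $1$ of multiplicity $q_2$. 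Hence $M'$ is nonsingular.

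For (ii), since $\sigma=\alpha/\beta$, $X^TSX=\operatorname{diag}(\alpha^2-\sigma^2\beta^2,M')=\operatorname{diag}(0,M')$ where $S:=A^TA-\sigma^2 B^TB$, and inverting with $X^{-T}=Y=[y,Y_2]$ gives the key factorization
\[
S = Y_2 M' Y_2^T,
\]
so $S$ is symmetric with $\ker S=\operatorname{span}\{x\}$ and $\operatorname{range}S=x^\perp$. Using $Sx=0$ one checks directly that the oblique projectors $I-yx^T$ and $I-xy^T$ in \eqref{correction} act as the identity on $\operatorname{range}S$, so the coefficient operator collapses to the self-adjoint restriction of $S$ to $x^\perp$. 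Expressing this restriction in the orthonormal basis $Q_y$ of $x^\perp$ from \eqref{qrX2W2}, and using $Q_y^TY_2=R_y$, yields
\[
Q_y^T S Q_y=(Q_y^TY_2)M'(Q_y^TY_2)^T=R_yM'R_y^T,
\]
so that $\kappa(M)=\kappa(R_yM'R_y^T)$.

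For (iii), apply $\kappa(R_yM'R_y^T)\leq\kappa(R_y)^2\kappa(M')$ and bound each factor. The diagonal entries of $M'$ satisfy $|\alpha_i^2-\sigma^2\beta_i^2|\leq\max\{1,\sigma^2\}$ because $\alpha_i,\beta_i\in(0,1)$ with $\alpha_i^2+\beta_i^2=1$, while their minimum absolute value is $\min\{1,\sigma^2,|h(\sigma,\sigma_*)|\}$ by the definition of $\sigma_*$. A two-case split on whether $|h(\sigma,\sigma_*)|$ or $\min\{1,\sigma^2\}$ governs the minimum yields $\kappa(M')\leq\max\{\frac{\max\{1,\sigma^2\}}{\min\{1,\sigma^2\}},\frac{\max\{1,\sigma^2\}(1+\sigma_*^2)}{|\sigma_*^2-\sigma^2|}\}$. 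Since $Q_y$ has orthonormal columns, $\kappa(R_y)=\kappa(Y_2)$; deleting the column $y$ from $Y=[y,Y_2]$ and applying Weyl's singular-value interlacing gives $\kappa(Y_2)\leq\kappa(Y)$, and \eqref{normralat} identifies $\kappa(Y)=\|X^{-1}\|\|X\|=\kappa(\bsmallmatrix{A\\B})$. Multiplying the two bounds produces \eqref{kappaM}.

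The hardest step is (ii): although $M$ is formally written as a map $\tilde y^\perp\to\tilde x^\perp$, one must recognize that in this ideal setting the oblique corrections disappear, so the relevant operator is the self-adjoint restriction of $S$ to its invariant subspace $x^\perp$, whose orthonormal representation is the symmetric matrix $R_yM'R_y^T$. Without this identification one is led to the nonsymmetric representation $R_yM'R_{X_2}^{-1}$ (where $R_{X_2}$ is the triangular factor of $X_2$), whose condition number does not admit a clean bound in terms of $\kappa(\bsmallmatrix{A\\B})$.
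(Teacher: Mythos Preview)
Your proposal is correct and follows essentially the same route as the paper: the paper obtains the factorization $(I-yx^T)S(I-xy^T)=Y_2M'Y_2^T=Q_yR_yM'R_y^TQ_y^T$ via the projector identity $I=YX^T=yx^T+Y_2X_2^T$ (equivalent to your direct $S=Y_2M'Y_2^T$ combined with $Sx=0$), and then bounds $\kappa(R_y)$ through $\|R_y\|=\|Y_2\|\le\|Y\|$ and $\|R_y^{-1}\|=\|Y_2^{\dagger}\|\le\|Y^{-1}\|$, which is precisely your interlacing argument for $\kappa(Y_2)\le\kappa(Y)$. The diagonal analysis of $\kappa(M')$ with the two-case split on whether $\min\{1,\sigma^2\}$ or $|h(\sigma,\sigma_*)|$ governs $\sigma_{\min}(M')$ is identical to the paper's.
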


\begin{proof}
For $\tilde x=x$ and $\tilde y=y$, we have $\theta=\sigma$ in \eqref{defM}.
For $Y=X^{-T}$ in \eqref{partitionY}, we have
$YX^T=XY^T=I.$ Therefore, from \eqref{qrX2W2} and
\eqref{expressionM} it follows that the coefficient matrix in \eqref{correction}
is
\begin{align}
  (I-yx^T)(A^TA-\sigma^2 B^TB)(I-xy^T)
 & =  (YX^T-yx^T)(A^TA-\sigma^2 B^TB)(XY^T-xy^T) \notag\\
 &=Y_2X_2^T(A^TA-\sigma^2 B^TB)X_2Y_2^T \notag\\
 &=Q_yR_yM^{\prime}R_y^TQ_y^T. \label{eqoper}
\end{align}
By the GSVD \eqref{gsvd} of $(A,B)$ and \eqref{SigmaA2B2}, it is
straightforward to obtain
\begin{eqnarray}\label{eigMrelat}
M^{\prime}&=&X_2^TA^TAX_2-\sigma^2 X_2^TB^TBX_2 \nonumber\\
&=&\Sigma_{A,2}^T\Sigma_{A,2}-\sigma^2\Sigma_{B,2}^T\Sigma_{B,2}\nonumber\\
&=&\diag\{C_2^2-\sigma^2 S_2^2,-\sigma^2 I_{q_1},I_{q_2}\}.
\end{eqnarray}
As a result, by the assumption, $M^{\prime}$ is nonsingular. Since $Q_y$ is column
orthonormal, it follows from \eqref{defM} and \eqref{eqoper} that $M$ is
nonsingular and
\begin{equation}\label{kappaMbound}
\kappa(M)=\kappa(R_yM^{\prime}R_y^T)\leq \kappa^2(R_y)\kappa(M^{\prime}).
\end{equation}

Notice that $Y_2$ consists of the second to the last columns of
$Y$. Then from \eqref{qrX2W2} and $Y=X^{-T}$ we obtain
\begin{equation*}
\|R_{y}\|=\|Y_2\|\leq\|Y\|=\|X^{-1}\|,\qquad
\|R_{y}^{-1}\|=\|Y_2^{\dag}\|\leq\|Y^{-1}\|=\|X\|, \label{kappaR1}
\end{equation*}
which means that $\kappa(R_{y})\leq\kappa(X)$.
Therefore, it follows from \eqref{normralat} that
\begin{equation}\label{kappaR1R2}
  \kappa(R_{y}) \leq \kappa(\bsmallmatrix{A\\B}).
\end{equation}

From \eqref{equiveq}, the diagonal elements of $M^{\prime}$ are $\alpha_i^2-\beta_i^2\sigma^2=h(\sigma,\sigma_i)$, $i=2,\dots,q$ with
$h(\sigma,\sigma_i)$ defined by \eqref{defineh}. By definition,
it is straightforward that
\begin{equation}\label{upperbound}
|h(\sigma,\sigma_i)|\leq\max\left\{\frac{\sigma_i^2}{1+\sigma_i^2},
\frac{\sigma^2}{1+\sigma_i^2}\right\}
\leq \max\{1,\sigma^2\}.
\end{equation}
Applying it and \eqref{equiveq} to \eqref{eigMrelat} yields
\begin{eqnarray}
 \sigma_{\max}(M^{\prime})
  &=&\max_{i=2,\ldots,q}\{1,\sigma^2,|\alpha_i^2-\beta_i^2\sigma^2|\}
  \leq\max\{1,\sigma^2\}. \label{maxmp}
\end{eqnarray}

Note that
$$
 \sigma_{\min}(M^{\prime})\!=\!
 \min\{1,\sigma^2,\min_{i=2,\ldots,q}|\alpha_i^2-\beta_i^2\sigma^2|\}.
$$
We next consider the following two cases.

Case (i): If
\begin{equation*}
 \sigma_{\min}(M^{\prime})\!=\!
\!\min_{i=2,\ldots,q}|\alpha_i^2-\beta_i^2\sigma^2|
  \! =\!|\alpha_*^2-\beta_*^2\sigma^2|=\frac{|\sigma_{*}^2-\sigma^2|}
   {1+\sigma_{*}^2},
\end{equation*}
that is, $\sigma$ is comparatively clustered with $\sigma_*$, then
by \eqref{maxmp} we obtain
\begin{equation}\label{kappacprime}
  \kappa(M^{\prime})=
\frac{\max\{1,\sigma^2\}(1+\sigma_{*}^2)}{|\sigma_{*}^2-\sigma^2|}.
\end{equation}

Case (ii): If
\begin{equation*}
 \sigma_{\min}(M^{\prime})\!=\!\min\{1,\sigma^2\},
\end{equation*}
that is, $\sigma$ is comparatively well separated from $\sigma_*$, then
it follows from \eqref{maxmp} that
$$
\kappa(M^{\prime})=
\frac{\max\{1,\sigma^2\}}{\min\{1,\sigma^2\}}.
$$
Relation \eqref{kappaM} follows from applying this relation,
\eqref{kappacprime} and \eqref{kappaR1R2} to \eqref{kappaMbound}.
\end{proof}

Theorem~\ref{thm:3} indicates that $\kappa(M)$ is bounded by
$\kappa^2(\bsmallmatrix{A\\B})$ multiplying
the first one in the maximum term of \eqref{kappaM}
if $\sigma$ is comparatively clustered with some other $\sigma_i$
and by $\kappa^2(\bsmallmatrix{A\\B})$ multiplying
the second one in the maximum term of \eqref{kappaM}
if $\sigma$ is comparatively well separated
from all the other $\sigma_i$, $i=2,3,\ldots,q$.

In an analogous manner, for the correction equation \eqref{correction2},
define $M_{\tau}$ by replacing $\theta$ with $\tau$ in \eqref{defM}.
Then for $\tilde x=x$ and $\tilde y=y$, we have
\begin{equation}\label{kappaMp}
    \kappa(M_{\tau})\leq\kappa^2
    \left(\begin{bmatrix}\begin{smallmatrix}A\\B
\end{smallmatrix}\end{bmatrix}\right)
\max\left\{\frac{\max\{1,\tau^2\}(1+\sigma_{*,\tau}^2)}
{|\sigma_{*,\tau}^2-\tau^2|},\frac{\max\{1,\tau^2\}}{\min\{1,\tau^2\}}\right\},
\end{equation}
where $\sigma_{*,\tau}$ is the minimizer of
$\min_{i=2,3,\ldots,q}\frac{|\sigma_i^2-\tau^2|}{1+\sigma_i^2}$.

We remark that, by a continuity argument,
bounds \eqref{kappaM} and \eqref{kappaMp} asymptotically hold as
$\tilde x\rightarrow x$ and $\theta\rightarrow \sigma$. Therefore,
\eqref{kappaM} and \eqref{kappaMp} give good estimates
for $\kappa(M)$ and $\kappa(M_{\tau})$, respectively,
once $\tilde x$ becomes a reasonably good approximation to $x$.

\subsection{Accuracy requirements on the inner iterations}\label{subsec:2}

We make an analysis on the inner iterations and establish some robust accuracy requirements on them, so that the outer iterations
of the resulting CPF-JDGSVD mimic the exact counterpart of
CPF-JDGSVD where the correction equations are solved accurately.

Assume that $\theta$ is not a generalized singular value of $(A,B)$.
Then the matrix $A^TA-\theta^2B^TB$ is nonsingular. Denote the matrices
\begin{equation}\label{defLK}
  L=(A^TA-\theta^2B^TB)^{-1}
  \quad\mbox{and}\quad
  K=L(A^TA+B^TB).
\end{equation}
The eigenpairs $(\sigma^2,x)$, $(0,x)$ and $(+\infty,x)$
of $(A^TA,B^TB)$ are transformed into
the eigenpairs $(\frac{\sigma^2+1}{\sigma^2-\theta^2},x)$,
$(-\frac{1}{\theta^2},x)$ and $(1,x)$ of $K$, respectively.

By \eqref{relatrx} and $(I-\tilde x\tilde y^T)t=t$,
equation \eqref{correction} can be rearranged as
\begin{equation*}
   (I-\tilde y\tilde x^T)L^{-1} (\theta t)=-L^{-1}\tilde x,
\end{equation*}
whose solution is
\begin{equation}\label{expressiony}
  \theta t=-\tilde x+\nu L\tilde y
\end{equation}
with $\nu=\tilde x^TL^{-1}(\theta t)$.
Premultiplying both hand sides of \eqref{expressiony} by $\tilde y^T$ and making
use of the orthogonality $t\perp \tilde y$ and
the normalization $\tilde y^T\tilde x=1$, we obtain
\begin{equation}\label{defnu}
\nu=\frac{1}{\tilde y^TL\tilde y}.
\end{equation}

Let $\tilde t$ be an approximate solution of \eqref{correction} with the relative
error $\varepsilon=\frac{\|\tilde t- t\|}{\|t\|}$. Then $\tilde t$ can be written
as
\begin{equation}\label{widey}
  \tilde t=t+\varepsilon\|t\|s,
\end{equation}
where $s$ is the error direction vector with $\|s\|=1$,
and the exact and inexact expansion vectors are
$x_{+}=\frac{(I-\widetilde X\widetilde X^T)t}
{\|(I-\widetilde X\widetilde X^T)t\|}$ and
$\widetilde x_{+}=\frac{(I-\widetilde X\widetilde X^T)\tilde t}
{\|(I-\widetilde X\widetilde X^T)\tilde t\|}$, respectively.
The relative error of $x_{+}$ and $\widetilde x_{+}$ can be
defined as
\begin{equation}\label{widevarepsilon}
  \tilde\varepsilon=
  \frac{\|(I-\widetilde X\widetilde X^T)\tilde t
  -(I-\widetilde X\widetilde X^T)t\|}
  {\|(I-\widetilde X\widetilde X^T)t\|}.
\end{equation}

As has been shown in \cite{huang2019inner,jia2014inner,jia2015harmonic},
in order to make the ratio of the distance between $x$ and $\X+span\{x_+\}$ and that between
$x$ and $\X+span\{\tilde{x}_+\}$ lie in
$[0.999,1.001]$, which means that $\X+span\{x_+\}$ and $\X+span\{\tilde{x}_+\}$ contain
almost the same information on $x$, it generally suffices to take a fairly small
\begin{equation}\label{range}
\tilde\varepsilon\in[10^{-4},10^{-3}],
\end{equation}
which will be utilized when designing robust stopping criteria for
the inner iterations.

The following result establishes an intimate relationship
between $\varepsilon$ and
$\tilde\varepsilon$.

\begin{theoremmy}\label{thm:4}
  Let $\tilde t$ be an approximation to the exact solution $t$ of
  \eqref{correction} with the relative error $\varepsilon$ satisfying
  \eqref{widey}, and $\tilde\varepsilon$ be defined by \eqref{widevarepsilon}.
  Let the matrices $L$ and $K$ be defined by \eqref{defLK} and
  $K^{\prime}=X_{\perp}^TKX_{\perp}$ with $X_{\perp}$
  such that $[\frac{x}{\|x\|},X_{\perp}]$ is orthogonal, and
  assume that
  \begin{equation}\label{sep}
    \sep(\rho,K^{\prime})=\|(K^{\prime}-\rho I)^{-1}\|^{-1}>0
    \qquad\mbox{with}\qquad \rho=\tilde y^TL\tilde y=\frac{1}{\nu}.
  \end{equation}
  Then
  \begin{equation}\label{inaccrelat}
    \varepsilon\leq\frac{2\|K\|}{\sep(\rho,K^{\prime})
    \|s_{\perp}\|}\tilde\varepsilon,
  \end{equation}
  where $s_{\perp}=(I-\widetilde X\widetilde X^T)s$ with the vector $s$
  defined by \eqref{widey}.
\end{theoremmy}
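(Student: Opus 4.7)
The plan is to reduce the theorem to a single norm-ratio estimate and then bound that ratio using the closed form of $t$ together with the spectral structure of $K$ at the target eigenvector $x$.

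First, inserting $\tilde t - t = \varepsilon\|t\|s$ from \eqref{widey} into definition \eqref{widevarepsilon} and using $(I-\widetilde X\widetilde X^T)s = s_\perp$ gives
\begin{equation*}
\tilde\varepsilon \;=\; \frac{\varepsilon\,\|t\|\,\|s_\perp\|}{\|(I-\widetilde X\widetilde X^T)t\|}, \qquad\text{equivalently}\qquad \varepsilon \;=\; \frac{\|(I-\widetilde X\widetilde X^T)t\|}{\|t\|\,\|s_\perp\|}\,\tilde\varepsilon.
\end{equation*}
Hence the claim reduces to the single bound $\|(I-\widetilde X\widetilde X^T)t\|/\|t\| \leq 2\|K\|/\sep(\rho,K')$, which will be established by sharply estimating the numerator and denominator in terms of the $X_\perp$-component of $\tilde x$.

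Next, I would substitute the closed form \eqref{expressiony}: since $\tilde y = (A^TA+B^TB)\tilde x$ gives $L\tilde y = K\tilde x$, one has $\theta t = -\tilde x + \nu K\tilde x = \nu(K-\rho I)\tilde x$, using $\nu\rho = 1$. Because $\tilde x\in\mathcal X$, the projector annihilates it, so $(I-\widetilde X\widetilde X^T)\theta t = \nu(I-\widetilde X\widetilde X^T)K\tilde x$. The key trick for the numerator is the decomposition $K\tilde x = \lambda_x\tilde x + (K-\lambda_x I)\tilde x$ with $\lambda_x = (1+\sigma^2)/(\sigma^2-\theta^2)$ the eigenvalue of $K$ associated with $x$; writing $\tilde x = \gamma_1(x/\|x\|) + X_\perp\eta_1$ and invoking $Kx = \lambda_x x$ gives $(K-\lambda_x I)\tilde x = (K-\lambda_x I)X_\perp\eta_1$, hence
\begin{equation*}
\|(I-\widetilde X\widetilde X^T)K\tilde x\| \;=\; \|(I-\widetilde X\widetilde X^T)(K-\lambda_x I)X_\perp\eta_1\| \;\leq\; (\|K\|+|\lambda_x|)\|\eta_1\| \;\leq\; 2\|K\|\,\|\eta_1\|,
\end{equation*}
where $|\lambda_x|\leq\|K\|$ follows from the spectral-radius bound.

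For the denominator, the block upper-triangular structure $W^T K W = \bsmallmatrix{\lambda_x & K_{12}\\ 0 & K'}$ in the orthogonal basis $W = [x/\|x\|, X_\perp]$ shows that the $X_\perp$-component of $(K-\rho I)\tilde x$ equals $(K'-\rho I)\eta_1$; orthogonality yields $\|(K-\rho I)\tilde x\|\geq \|(K'-\rho I)\eta_1\| \geq \sep(\rho,K')\|\eta_1\|$, so $\|t\|\geq(|\nu|/|\theta|)\sep(\rho,K')\|\eta_1\|$. Dividing the numerator bound by this denominator bound, the factors $|\nu|/|\theta|$ and $\|\eta_1\|$ cancel cleanly and leave exactly $2\|K\|/\sep(\rho,K')$, which combined with the first-step identity proves \eqref{inaccrelat}.

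The main obstacle, which this plan must navigate carefully, is that the naive estimate $\|(I-\widetilde X\widetilde X^T)K\tilde x\| \leq \|K\|\|\tilde x\|$ would introduce an uncontrolled factor $\|\tilde x\|/\|\eta_1\|$ that blows up precisely in the regime of interest, when $\tilde x$ approaches $x$. Recognising that $\lambda_x\tilde x$ is the dominant piece to subtract—so that only the orthogonal component $X_\perp\eta_1$ survives in the numerator—is the crucial observation, and it is ultimately the eigenvalue relation $Kx = \lambda_x x$ that enables the $\sep$ in the denominator to cancel the $\|\eta_1\|$ in the numerator and leave a constant depending only on $\|K\|$ and $\sep(\rho,K')$.
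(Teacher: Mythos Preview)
Your proposal is correct and follows essentially the same route as the paper: both reduce to bounding $\|(I-\widetilde X\widetilde X^T)K\tilde x\|/\|(K-\rho I)\tilde x\|$, both exploit $Kx=\lambda_x x$ to make the $x$-component of $\tilde x$ drop out of the numerator, and both use the block upper-triangular Schur form of $K$ in the basis $[x/\|x\|,X_\perp]$ to get the $\sep(\rho,K')$ lower bound on the denominator. The only cosmetic differences are that you work directly with the coordinate $\eta_1=X_\perp^T\tilde x$ whereas the paper uses the angle $\psi=\angle(\tilde x,x)$ (with $\|\eta_1\|=\|\tilde x\|\sin\psi$), and you derive the $\sep$ inequality from the block structure yourself while the paper cites Theorem~6.1 of Jia--Stewart for the same estimate.
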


\begin{proof}
Premultiplying both hand sides
of \eqref{widey} by $(I\!-\!\widetilde X\widetilde X^T)$ and taking norms
give
  \begin{equation*}
    \varepsilon=\frac{\|(I-\widetilde X\widetilde X^T)\tilde t
    -(I-\widetilde X\widetilde X^T)t\|}
    {\|t\|\|(I-\widetilde X\widetilde X^T)s\|}
    =\frac{\|(I-\widetilde X\widetilde X^T)t\|}
    {\|t\|\|s_{\perp}\|}\tilde\varepsilon.
  \end{equation*}
  By \eqref{expressiony}, substituting $t=
  \frac{1}{\theta}(-\tilde x+\nu L\tilde y)$ into the above relation
  and making use of $\tilde x\in\X$ and
  $\tilde y=(A^TA+B^TB)\tilde x$, we obtain
  \begin{eqnarray}
  \varepsilon
  &=&\frac{\|(I-\widetilde X\widetilde X^T)(-\tilde x+\nu L\tilde y)\|}
    {\|-\tilde x+\nu L\tilde y\|\|s_{\perp}\|}\tilde\varepsilon\
  =\frac{\|(I-\widetilde X\widetilde X^T)(\nu L\tilde y)\|}
    {\|\nu L\tilde y-\tilde x\|\|s_{\perp}\|}\tilde\varepsilon  \nonumber\\
  &=&\frac{\|(I-\widetilde X\widetilde X^T)
  (\nu L(A^TA+B^TB)\tilde x)\|}
    {\|\nu L(A^TA+B^TB)\tilde x-\tilde x\|\|s_{\perp}\|}\tilde\varepsilon \nonumber\\
  &=&\frac{\|(I-\widetilde X\widetilde X^T)K
  \tilde x\|}{\|K\tilde x-\rho
  \tilde x\|\|s_{\perp}\|}\tilde\varepsilon,
  \label{varepsilon}
  \end{eqnarray}
  where, by definition \eqref{defnu}, $\rho=\frac{1}{\nu}=\tilde y^TL\tilde y$.

Since $\tilde x\rightarrow x$ and $\tilde y\rightarrow y=(A^TA+B^TB)x$,
from \eqref{defLK} and $x^Ty=1$ we have
\begin{equation}\label{rhoapprox}
   \rho \rightarrow  y^TL(A^TA+B^TB)x
   =y^TKx
    = \frac{\sigma^2+1}{\sigma^2-\theta^2}y^Tx
   =\frac{\sigma^2+1}{\sigma^2-\theta^2}.
 \end{equation}
 Therefore, the pair $(\rho,\tilde x)$ is an
 approximation to the simple eigenpair
 $(\frac{\sigma^2+1}{\sigma^2-\theta^2},x)$ of $K$.
Set $x_s=x/\|x\|$, and notice by assumption that $[x_s,X_{\perp}]$ is orthogonal.
Then we have a Schur like decomposition:
 \begin{equation}\label{eigK}
   \begin{bmatrix}x_s^T\\X_{\perp}^T\end{bmatrix}K
   \begin{bmatrix}x_s&X_{\perp}\end{bmatrix}=
   \begin{bmatrix}\frac{\sigma^2+1}{\sigma^2-\theta^2} &x_s^TKX_{\perp}
   \\\bm{0}&K^{\prime}\end{bmatrix}
 \end{equation}
 with $K^{\prime}=X_{\perp}^TKX_{\perp}$.
 By Theorem 6.1 of \cite{jia2001analysis}, we obtain
 \begin{equation}\label{denominator}
 \|K\tilde x_s-\rho\tilde x_s\|\geq
 \sin\psi\cdot\sep(\rho,K^{\prime}),
 \end{equation}
 where $\sep(\rho,K^{\prime})=\|(K^{\prime}-\rho I)^{-1}\|^{-1}$
 and $\psi=\angle(\tilde x,x)$
 is the acute angle of $\tilde x$ and $x$.

Let us decompose $\tilde x_s$ and $x_s$ into the orthogonal
 direct sums:
 \begin{equation*}
 \tilde x_s=x_s\cos\psi+w_1\sin\psi
 \quad\mbox{and}\quad
 x_s=\tilde x_s\cos\psi+w_2\sin\psi,
 \end{equation*}
where $w_1\perp x_s$ and $w_2\perp \tilde x_s$ with $\|w_1\|=\|w_2\|=1$.
Exploiting these two decompositions and
$(I-\widetilde X\widetilde X^T)\tilde x_s=\bm{0}$ yields
\begin{eqnarray*}
 (I-\widetilde X\widetilde X^T)K\tilde x_s
 &=&(I-\widetilde X\widetilde X^T)
    K(x_s\cos\psi+w_1\sin\psi)\\
 &=&(I-\widetilde X\widetilde X^T)
    \left(\frac{\sigma^2+1}{\sigma^2-\theta^2}
     x_s\cos\psi+Kw_1\sin\psi\right)\\
 &=&(I-\widetilde X\widetilde X^T)
    \left(\frac{\sigma^2+1}{\sigma^2-\theta^2}
    (\tilde x_s\cos\psi+w_2\sin\psi)
    \cos\psi+Kw_1\sin\psi\right)   \\
 &=&(I-\widetilde X\widetilde X^T)
    \left(\frac{\sigma^2+1}{\sigma^2-\theta^2}
    w_2\cos\psi+Kw_1\right)\sin\psi.
\end{eqnarray*}
Since $\frac{\sigma^2+1}{\sigma^2-\theta^2}$ is an eigenvalue of $K$, we have
$\frac{\sigma^2+1}{|\sigma^2-\theta^2|}\leq\|K\|$.
Taking norms on both hand sides in the above
relation, we obtain
\begin{eqnarray}
  \|(I-\widetilde X\widetilde X^T)K\tilde x_s\|
  &\leq&\|I-\widetilde X\widetilde X^T\|\|K\|
  (\|w_2\|\cos\psi+\|w_1\|)\sin\psi\nonumber \\
  &\leq&2\|I-\widetilde X\widetilde X^T\|\|K\|\sin\psi \nonumber \\
  &\leq&2\|K\|\sin\psi. \label{numerator}
\end{eqnarray}
Relation \eqref{inaccrelat} then follows by applying \eqref{denominator}
and \eqref{numerator} to \eqref{varepsilon}.
\end{proof}

Theorem \ref{thm:4} reveals an intrinsic connection between the
solution accuracy $\varepsilon$ of the correction equation
\eqref{correction} and the
accuracy $\tilde\varepsilon$ of the expansion vector
$\widetilde{x}_{+}$. For the solution accuracy
$\varepsilon$ of the correction equation \eqref{correction2} and the
accuracy $\tilde\varepsilon$ of the corresponding expansion vector,
we can analogously prove
\begin{equation}\label{inaccrelat2}
    \varepsilon\leq\frac{2\|K_{\tau}\|}{\sep(\rho_{\tau},K_{\tau}^{\prime})
    \|s_{\perp}\|}\tilde\varepsilon
    \quad\mbox{with}\quad
    \rho_{\tau}=\tilde y^TL_\tau \tilde y,
\end{equation}
where
\begin{equation}\label{taupair}
L_{\tau}=(A^TA-\tau^2B^TB)^{-1}, \quad
K_{\tau}=L_{\tau}(A^TA+B^TB),\quad
K_{\tau}^{\prime}=X_{\perp}^TK_{\tau}X_{\perp}.
\end{equation}

Relation \eqref{inaccrelat} (resp. \eqref{inaccrelat2}) indicates that
once $\tilde\varepsilon$ is given, we are able to determine the $least$ or
$lowest$ accuracy requirement $\varepsilon$ for the correction
equation \eqref{correction} (resp. \eqref{correction2}) from
\eqref{inaccrelat} (resp. \eqref{inaccrelat2}).

\subsection{Stopping criteria for the inner iterations}\label{subsec:3}

Our goal is to practically derive the least accuracy requirement for the
approximate solution of the relevant correction equation, so that
the resulting (inexact) CPF-JDGSVD method and the
exact CPF-JDGSVD method where the correction equations are solved accurately
use almost the same outer iterations
to achieve a prescribed stopping tolerance. We next show how to 
design practical stopping criteria for the
inner iterations for solving \eqref{correction} and \eqref{correction2},
respectively.

From \eqref{inaccrelat} and \eqref{inaccrelat2},
since $\|s_{\perp}\|$ is uncomputable in practice,
we simply replace it by its upper bound one,
which makes $\varepsilon$ as small as possible,
so that the inexact CPF-JDGSVD method is more
reliable to mimic its exact counterpart.

From \eqref{defLK} and the GSVD \eqref{gsvd} of $(A,B)$, the
other eigenvalues of $K$ than $\frac{\sigma^2+1}{\sigma^2-\theta^2}$
are $q_1$-multiple $-\frac{1}{\theta^2}$, $q_2$-multiple $1$
and $\frac{\sigma_i^2+1}{\sigma_i^2-\theta^2}$, $i=2,\dots,q$.
By \eqref{eigK}, they are also the eigenvalues of $K^{\prime}$.
By \eqref{rhoapprox}, we can use
$\sep(\frac{\sigma^2+1}{\sigma^2-\theta^2},K^{\prime})$
to estimate $\sep(\rho,K^{\prime})$:
\begin{eqnarray*}
   \sep(\rho,K^{\prime})&\approx&
   \min\left\{\left|\frac{\sigma^2+1}{\sigma^2-\theta^2}+\frac{1}{\theta^2}\right|,
\left|\frac{\sigma^2+1}{\sigma^2-\theta^2}-1\right|, {\min\limits_{i=2,\ldots,q}
   \left|\frac{\sigma^2+1}{\sigma^2-\theta^2}-
   \frac{\sigma_i^2+1}{\sigma_i^2-\theta^2}\right|}\right\}\\  &\approx& \frac{\sigma^2+1}{|\sigma^2-\theta^2|},
\end{eqnarray*}
where we have used $\theta\approx \sigma$. Since $\theta$ is supposed to approximate $\sigma$, the eigenvalue
$\frac{\sigma^2+1}{\sigma^2-\theta^2}$ is the largest one in magnitude of $K$.
Therefore, it is reasonable to use
$\frac{\sigma^2+1}{|\sigma^2-\theta^2|}$ to estimate $\|K\|$.
Applying these estimates for $\|K\|$ and $\sep(\rho,K^{\prime})$
to \eqref{inaccrelat}, we should terminate the inner iterations of solving the
correction equation \eqref{correction} once
\begin{equation}\label{accsigma}
   \varepsilon\leq 2\tilde\varepsilon
\end{equation}
for a given $\tilde\varepsilon\in [10^{-4},10^{-3}]$; see \eqref{range}.

If $\theta$ is replaced by the fixed target $\tau$, the other eigenvalues
of $K_{\tau}$ than $\frac{\sigma^2+1}{\sigma^2-\tau^2}$ are
$q_1$-multiple $-\frac{1}{\tau^2}$, $q_2$-multiple $1$ and
$\frac{\sigma^2_i+1}{\sigma^2_i-\tau^2}$, $i=2,\dots,q$,
which are also the eigenvalues of $K_{\tau}^{\prime}$.
For \eqref{inaccrelat2}, since the parameter
$\rho_{\tau}=\tilde y^TL_{\tau}\tilde y\approx\frac{\sigma^2+1}{\sigma^2-\tau^2}
\approx\frac{\theta^2+1}{\theta^2-\tau^2}$, we use
$\sep(\frac{\theta^2+1}{\theta^2-\tau^2}, K_{\tau}^{\prime})$
to replace $\sep(\rho,K_{\tau}^{\prime})$ and obtain the estimate
{\small \begin{eqnarray*}
\sep(\frac{\theta^2+1}{\theta^2-\tau^2}, K_{\tau}^{\prime})
&\approx&
   \min\left\{\left|\frac{\theta^2+1}{\theta^2-\tau^2}+\frac{1}{\tau^2}\right|,
\left|\frac{\theta^2+1}{\theta^2-\tau^2}-1\right|, {\min\limits_{i=2,\ldots,q}
   \left|\frac{\theta^2+1}{\theta^2-\tau^2}-
   \frac{\sigma_i^2+1}{\sigma_i^2-\tau^2}\right|}\right\}.\label{estsep}
\end{eqnarray*}}
Observe that the absolute value of the largest eigenvalue in magnitude
of $K_{\tau}$ is
$$
  \max_{i=1,\ldots,q}\left\{\frac{1}{\tau^2},1,
  \frac{\sigma_i^2+1}{|\sigma_i^2-\tau^2|}\right\}.
$$
We use it as an estimate for $\|K_{\tau}\|$. Since
the eigenvalues of $K_{\tau}$ and $K^{\prime}_{\tau}$ are unknown,
we need to further replace the above two a-priori
estimates by exploiting the information available in computations.
Let
$\theta_i,\ i=1,2,\ldots,k$ be the generalized singular values
of $(G,H)$, and suppose that $\theta=\theta_1$
approximates the desired $\sigma$. Then we ultimately estimate $\|K_{\tau}\|$ and
$\sep(\frac{\theta^2+1}{\theta^2-\tau^2}, K_{\tau}^{\prime})$ as follows:
\begin{equation}\label{estktau}
\|K_{\tau}\|\approx  \max_{i=1,\ldots,k}\left\{\frac{1}{\tau^2},1,
  \frac{\theta_i^2+1}{|\theta_i^2-\tau^2|}\right\}
\end{equation}
and
{\small \begin{eqnarray*}
\sep(\frac{\theta^2+1}{\theta^2-\tau^2}, K_{\tau}^{\prime})
&\approx&
   \min\left\{\left|\frac{\theta^2+1}{\theta^2-\tau^2}+\frac{1}{\tau^2}\right|,
\left|\frac{\theta^2+1}{\theta^2-\tau^2}-1\right|, {\min\limits_{i=2,\ldots,k}
   \left|\frac{\theta^2+1}{\theta^2-\tau^2}-
   \frac{\theta_i^2+1}{\theta_i^2-\tau^2}\right|}\right\}.
\end{eqnarray*}}

Define $c_{\tau}$ to be the ratio of the right-hand sides of \eqref{estktau} and
the above relation. 
Then we terminate the
inner iterations of solving the correction equation \eqref{correction2} provided that
$\varepsilon\leq 2c_{\tau}\tilde\varepsilon$.
In practice, in order to guarantee that $\X+span\{\tilde{x}_+\}$ has some
improvement over $\X$, as a safeguard, we propose to take
\begin{equation}\label{acctau}
  \varepsilon\leq \min\{2c_{\tau}\tilde\varepsilon,0.01\}.
\end{equation}

However, $\varepsilon=\frac{\|\tilde t-t\|}{\|t\|}$
is an a-priori error and uncomputable in practice,
which makes us impossible to determine whether or not
$\varepsilon$ becomes smaller than $2\tilde\varepsilon$
or $\min\{2c_{\tau}\tilde\varepsilon,0.01\}$.
Alternatively, denote by
\begin{equation*}
  \|r_{in}\|=
  \tfrac{1}{\|r\|}\left\|-r-(I-\tilde y\tilde x^T)
  (A^TA-\theta^2B^TB)(I-\tilde x \tilde y^T)\tilde t\right\|
\end{equation*}
the relative residual norm of approximate
solution $\tilde t$ of the correction equation \eqref{correction},
and by $\|r_{in,\tau}\|$
the relative residual norm of approximate solution $\tilde t$ of the
correction equation \eqref{correction2}.
Then it is straightforward to justify that
\begin{equation}\label{rinvarep}
  \frac{\varepsilon}{\kappa(M)}
  \leq \|r_{in}\| \leq\kappa(M)\varepsilon
  \quad\mbox{and}\quad
   \frac{\varepsilon}{\kappa(M_{\tau})}
  \leq \|r_{in,\tau}\| \leq\kappa(M_{\tau})\varepsilon,
\end{equation}
where $M$ and $M_{\tau}$ are the matrices
$A^TA-\theta^2B^TB$ and $A^TA-\tau^2B^TB$ restricted to the subspace
$\tilde y^{\perp}$ and map $\tilde y^{\perp}$ to $\tilde x^{\perp}$.
The asymptotic upper bounds for $\kappa(M)$ and $\kappa(M_{\tau})$ are
\eqref{kappaM} and \eqref{kappaMp}, respectively.
Practically, the bounds in \eqref{rinvarep} motivate us
to replace $\varepsilon$ with the corresponding inner relative residual norms
and stop the inner iterations of solving the correction
equations \eqref{correction} and \eqref{correction2}
when
\begin{equation}\label{instopping}
  \|r_{in}\|\leq 2\tilde\varepsilon
  \quad\mbox{and}\quad
  \|r_{in,\tau}\|\leq \min\{2c_{\tau}\tilde\varepsilon,0.01\}
  \end{equation}
  for a given $\tilde\varepsilon$.
When $\kappa(M)$ or $\kappa(M_{\tau})$ is not large,
$\|r_{in}\|$ or $\|r_{in,\tau}\|$ is a reliable replacement of
$\varepsilon$, so that the practical criterion \eqref{instopping}
is robust.

\section{A thick-restart CPF-JDGSVD algorithm with deflation}\label{sec:5}

We discuss extensions and algorithmic developments of the previous basic
CPF-JDGSVD algorithm, which include thick-restart and deflation and
enable us to compute several GSVD components of $(A,B)$.

\subsection{Thick-restart}\label{subsec:4}

As the searching subspaces become large,
the basis matrices $\widetilde U$, $\widetilde V$ and $\widetilde X$
are large. CPF-JDGSVD will be prohibitive due to the excessive
computational complexity. A common approach is to restart the basic algorithm after a
maximum subspace dimension $k_{\max}$ is reached.
We will adapt the thick-restart
technique \cite{stath1998} to our method
for its effectiveness and simplicity in implementations. A main
ingredient is to retain minimal $k_{\min}$ dimensional
left and right searching subspaces
for restart, which are expected to contain some most important
information available on the desired GSVD component at the current cycle.

At the extraction stage, let the GSVD of $(G,H)$ be partitioned as
\begin{equation}\label{partitionr}
   (\Sigma_{G},\Sigma_{H},E,F,D)=\left(
   \bsmallmatrix{\Sigma_{G,1}&\\&\Sigma_{G,2}},
   \bsmallmatrix{\Sigma_{H,1}&\\&\Sigma_{H,2}},
   \bsmallmatrix{E_1,&E_2},
   \bsmallmatrix{F_1,&F_2},
   \bsmallmatrix{D_1,&D_2}
   \right),
\end{equation}
such that  $(\Sigma_{G,1},\Sigma_{H,1},E_1,F_1,D_1)$ is
the partial GSVD  associated with the $k_{\min}$
generalized singular values of $(G,H)$
closest to the target $\tau$, i.e.,
\begin{equation}\label{pgsvdGH}
  GD_1=E_1\Sigma_{G,1}
  \quad\mbox{and}\quad
  HD_1=F_1\Sigma_{H,1}.
\end{equation}

Let the new starting right searching subspace, denoted by $\X_{\rm new}$,
be spanned by the columns of $\widetilde X D_1$.
Then the corresponding starting left searching subspaces,
denoted by $\mathcal{U}_{\rm new}$
and $\V_{\rm new}$, are spanned by the columns of
$A\widetilde X D_1$ and $B\widetilde X D_1$, respectively.
Let $D_1=Q_{r}R_{r}$ be the thin QR factorization of $D_1$.
Then the columns of $\widetilde X_{\rm new}=\widetilde XQ_r$
form an orthonormal basis of $\X_{\rm new}$.
Combining this with \eqref{qrAXBX} and \eqref{pgsvdGH}, we obtain
\begin{eqnarray}
A\widetilde X_{\rm new}
&=&A\widetilde XQ_r=\widetilde U GD_1R_r^{-1}
  = \widetilde UE_1\cdot\Sigma_{G,1}R_r^{-1},\label{qrAXnew}\\
B\widetilde X_{\rm new}
&=&B\widetilde XQ_r=\widetilde V HD_1R_r^{-1}
  = \widetilde VF_1\cdot\Sigma_{H,1}R_r^{-1},\label{qrBXnew}
\end{eqnarray}
where the columns of $\widetilde U E_1$ and $\widetilde V F_1$
are orthonormal, and
$\Sigma_{G,1}R_r^{-1}$ and
$\Sigma_{H,1}R_r^{-1}$ are upper triangular.
Therefore, the right-hand sides of \eqref{qrAXnew} and \eqref{qrBXnew} are
the thin QR factorizations of
$A\widetilde X_{\rm new}$ and $B\widetilde X_{\rm new}$, respectively.
Setting the new matrices $\widetilde U_{\rm new}\!=\!\widetilde U E_1$,
$\widetilde V_{\rm new}\!=\!\widetilde V F_1$ and
$G_{\rm new}\!=\!\Sigma_{G,1}R_r^{-1}$,
$H_{\rm new}\!=\!\Sigma_{H,1}R_r^{-1}$ and rewriting
$\widetilde U$, $\widetilde V$, $\widetilde X$
and $G$, $H$ as $\widetilde U_{\rm new}$,
$\widetilde V_{\rm new}$, $\widetilde X_{\rm new}$ and $G_{\rm new}$,
$H_{\rm new}$, respectively, we then expand the subspaces
in a regular way until they reach the dimension $k_{\max}$ or
the algorithm converges.
In such a way, we have developed a thick-restart CPF-JDGSVD algorithm.

\subsection{Deflation}\label{subsec:5}

Suppose that we are required to compute the $\ell$ GSVD components
$(\sigma_i,u_i,v_i,x_i)$ of $(A,B)$ with $\sigma_i$ closest to
$\tau$, $i=1,\dots,\ell$. By introducing an appropriate deflation technique,
we shall develop a CPF-JDGSVD algorithm for such purpose.
The following result, which is straightforward to justify, forms the
basis of our deflation technique.

\begin{propositionmy}\label{thm:6}
For integer $1 \leq j<\ell$, let the partial GSVD of $(A,B)$
\begin{equation*}
(C_j,S_j,U_j,V_j,X_j)=\left(
\begin{bmatrix}\begin{smallmatrix}
\alpha_1&&\\&\ddots&\\&&\alpha_j
\end{smallmatrix}\end{bmatrix},
\begin{bmatrix}\begin{smallmatrix}
\beta_1&&\\&\ddots&\\&&\beta_j
\end{smallmatrix}\end{bmatrix},
\begin{bmatrix}\begin{smallmatrix}
u_1,&\dots,&u_j
\end{smallmatrix}\end{bmatrix},
\begin{bmatrix}\begin{smallmatrix}
v_1,&\dots,&v_j
\end{smallmatrix}\end{bmatrix},
\begin{bmatrix}\begin{smallmatrix}
x_1,&\dots,&x_j
\end{smallmatrix}\end{bmatrix}\right)
\end{equation*}
be defined by \eqref{gsvd} and \eqref{gsvalue}, and define
$Y_{j}\!=\!(A^TA+B^TB)X_j\!=\!A^TU_{j}C_{j}+B^TV_jS_{j}$.
Then $(\alpha_i,\beta_i,u_i,v_i,x_i)$, $i\!=\!j+1,\ldots,q$ are
the GSVD components of the matrix pair
  \begin{equation}\label{deflatAB}
  (\bm{A}_j,\bm{B}_j):=
  (A(I-X_jY_j^T),B(I-X_jY_j^T)).
\end{equation}
restricted to the orthogonal complement of $span\{Y_j\}$.
\end{propositionmy}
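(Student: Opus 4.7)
The plan is to reduce the entire statement to the single algebraic fact that $X$ and $Y := (A^TA+B^TB)X$ are inverse transposes of each other; since $X^T(A^TA+B^TB)X = I_n$, we have $Y = X^{-T}$ and therefore $Y^TX = I_n$. Partitioning $X = [X_j,X']$ conformally with $Y = [Y_j,Y']$, this bi-orthogonality yields
\begin{equation*}
Y_j^T X_j = I_j \qquad\text{and}\qquad Y_j^T x_i = 0 \text{ for every column } x_i \text{ of } X',
\end{equation*}
so in particular $Y_j^T x_i = 0$ for $i = j+1,\dots,q$. It follows that $X_j Y_j^T$ is the oblique projector onto $\mathrm{span}\{X_j\}$ along $\mathrm{span}\{Y_j\}^{\perp}$, while $I - X_j Y_j^T$ has range $\mathrm{span}\{Y_j\}^{\perp}$ and kernel $\mathrm{span}\{X_j\}$.

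Using these projector properties, I would then evaluate $\bm{A}_j$ and $\bm{B}_j$ on the candidate right generalized singular vectors. For each $i \in \{j+1,\dots,q\}$, the identity $Y_j^T x_i = 0$ gives $(I - X_j Y_j^T)x_i = x_i$, and hence
\begin{equation*}
\bm{A}_j x_i = A x_i = \alpha_i u_i, \qquad \bm{B}_j x_i = B x_i = \beta_i v_i.
\end{equation*}
Combined with $\alpha_i^2 + \beta_i^2 = 1$ and the orthonormality of $\{u_{j+1},\dots,u_q\}$ and $\{v_{j+1},\dots,v_q\}$ inherited from the orthogonality of $U$ and $V$, this already exhibits $(\alpha_i,\beta_i,u_i,v_i,x_i)$ as a GSVD component of $(\bm{A}_j,\bm{B}_j)$ whose right vector lies in $\mathrm{span}\{Y_j\}^{\perp}$.

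To dispatch the qualifier \emph{restricted to the orthogonal complement of} $\mathrm{span}\{Y_j\}$, I would observe that $\bm{A}_j$ and $\bm{B}_j$ annihilate $\mathrm{span}\{X_j\}$ (since $(I-X_jY_j^T)X_j=0$) and coincide with $A$ and $B$ on $\mathrm{span}\{Y_j\}^{\perp}$, so their nontrivial GSVD structure lives entirely on that complement. The $n-j$ remaining columns of $X$, namely $x_{j+1},\dots,x_q$ together with the columns of $X_{q_1}$ and $X_{q_2}$, are all bi-orthogonal to $Y_j$ and linearly independent, hence form a basis of the $(n-j)$-dimensional subspace $\mathrm{span}\{Y_j\}^{\perp}$; a short dimension count then certifies that the GSVD of $(\bm{A}_j,\bm{B}_j)$ restricted to this complement is precisely the trailing part of the GSVD of $(A,B)$. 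The only delicate point is the bookkeeping between the nontrivial block and the two trivial blocks indexed by $X_{q_1}$ and $X_{q_2}$, which is settled uniformly because $Y_j^T x = 0$ for every column $x$ of $X$ outside $X_j$; no deeper obstacle appears.
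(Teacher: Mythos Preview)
Your argument is correct and is exactly the straightforward verification the authors have in mind: the paper does not supply a proof of this proposition at all, stating only that the result ``is straightforward to justify.'' Your use of the biorthogonality $Y^TX=I_n$ (equivalently $Y=X^{-T}$, cf.\ \eqref{partitionY}) to conclude $Y_j^Tx_i=0$ for $i>j$, and hence $(I-X_jY_j^T)x_i=x_i$, is precisely the one-line computation that underlies the claim; the remaining dimension count and projector bookkeeping you sketch are routine.
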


Assume that $(\tilde\alpha_i,\tilde\beta_i,
\tilde u_i,\tilde v_i,\tilde x_i), \ i=1,2,\ldots,j$
are converged approximations to the GSVD components
$(\alpha_i,\beta_i,u_i,v_i,x_i)$ of $(A,B)$ that satisfy the stopping criteria
\begin{equation}\label{converged}
  \|r_{i}\|
  =\|\tilde\beta_{i}A^T\tilde u_{i}
   -\tilde\alpha_{i}B^T\tilde v_{i}\|
  \leq(\tilde\beta_{i}\|A\|_1
  +\tilde\alpha_{i}\|B\|_1)\cdot tol,
  \quad\quad i=1,\dots,j.
\end{equation}
Then
\begin{equation*}
(C_c,S_c,U_c,V_c,X_c)=\left(
\begin{bmatrix}\begin{smallmatrix}
\tilde\alpha_1&&\\&\ddots&\\&&\tilde\alpha_j
\end{smallmatrix}\end{bmatrix},
\begin{bmatrix}\begin{smallmatrix}
\tilde\beta_1&&\\&\ddots&\\&&\tilde\beta_j
\end{smallmatrix}\end{bmatrix},
\begin{bmatrix}\begin{smallmatrix}
\tilde u_1,&\dots,&\tilde u_j
\end{smallmatrix}\end{bmatrix},
\begin{bmatrix}\begin{smallmatrix}
\tilde v_1,&\dots,&\tilde v_j
\end{smallmatrix}\end{bmatrix},
\begin{bmatrix}\begin{smallmatrix}
\tilde x_1,&\dots,&\tilde x_j
\end{smallmatrix}\end{bmatrix}\right)
\end{equation*}
is an approximation to the partial GSVD $(C_j,S_j,U_j,V_j,X_j)$ of
$(A,B)$ that satisfies $AX_c=U_cC_c$, $BX_c=V_cS_c$,
$C_c^2+S_c^2=I_{j}$ and
\begin{equation}\label{conpgsvd}
\|A^TU_cS_c-B^TV_cC_c\|_F=\|[r_1,\dots,r_j]\|_F\leq
\sqrt{j(\|A\|_1^2+\|B\|_1^2)}\cdot tol,
\end{equation}
where the last inequality holds since
$\|r_j\|\leq\sqrt{\|A\|_1^2+\|B\|_1^2}\cdot tol$ from \eqref{converged}.

Denote the matrix
$$
Y_{c}=(A^TA+B^TB)X_c=A^TU_cC_c+B^TV_cS_c.
$$
Then $Y_c^TX_c=X_c^T(A^TA+B^TB)X_c=I$, and $I-X_cY_c^T$ is an
oblique projector onto the orthogonal complement of $span\{Y_c\}$.
Proposition~\ref{thm:6} indicates that, in order to
compute the next GSVD component
$(\alpha,\beta,u,v,x):=(\alpha_{j+1},\beta_{j+1},u_{j+1},v_{j+1},x_{j+1})$
of $(A,B)$, one can apply CPF-JDGSVD to the matrix pair
\begin{equation*}
  (\bm{\widetilde A}_j,\bm{\widetilde B}_j)
  =(A(I- X_{c} Y_c^T),B(I- X_{c} Y_c^T)).
\end{equation*}

If $(\tilde\alpha,\tilde\beta,\tilde u,\tilde v,\tilde x)$ has not yet
converged to $(\alpha,\beta,u,v,x)$,
we set $X_p=[X_c,\tilde x]$ and $Y_p=[Y_c,\tilde y]$ with
$\tilde y$
defined by \eqref{require}.
It is easily seen that the columns of $X_p$ and $Y_p$ are
biorthogonal, i.e., $Y_p^TX_p=I_{j+1}$, and that
$I-X_pY_p^T$ and $I-Y_pX_p^T$ are oblique projectors onto
the orthogonal complements of $span\{Y_p\}$ and $span\{X_p\}$, respectively.
At the expansion stage, if the residual $r$ of $(\tilde\alpha,\tilde\beta,
\tilde u,\tilde v,\tilde x)$ satisfies criterion \eqref{fixtarget},
we switch to solving the correction equation
\begin{equation}\label{multicorrectionsig}
  (I-Y_pX_p^T)(A^TA-\theta^2 B^TB)(I-X_pY_p^T)t=-r
  \quad\mbox{for}\quad t\perp Y_p
\end{equation}
instead of continuing to solve
\begin{equation}\label{multicorrectiontau}
  (I-Y_pX_p^T)(A^TA-\tau^2 B^TB)(I-X_pY_p^T)t=-r
  \quad\mbox{for}\quad t\perp Y_p.
\end{equation}

We remark that the Galerkin condition \eqref{galerkin}
ensures that the current residual $r$ is naturally orthogonal
to the current $\tilde x$ and it is also orthogonal to $X_c$
if $X_{c}=X_{j}$, i.e., the convergence tolerance $tol=0$ in \eqref{conpgsvd}.
On the other hand, for $tol>0$, the residual $r$ is
usually not in the orthogonal complement of $span\{X_c\}$.
For the consistency of the correction equations \eqref{multicorrectionsig} and
\eqref{multicorrectiontau},
we replace the residual $r$ in their right-hand sides
with the projected one
$$
r_{p}=(I-Y_cX_c^T)r
$$
and solve the modified correction equations \eqref{multicorrectionsig} and
\eqref{multicorrectiontau}, respectively.

In the ideal case that $(C_c,S_c,U_c,V_c,X_c)=(C_j,S_j,U_j,V_j,X_j)$,
that is, $tol=0$ in \eqref{converged} and
we have computed the $j$ desired GSVD components exactly,
following the same derivations as those in
Sections \ref{subsec:2}--\ref{subsec:3}, we can directly obtain \eqref{inaccrelat}
and \eqref{inaccrelat2}
for the accuracy $\tilde\varepsilon$ of the expansion vectors and
 for the solution accuracy $\varepsilon$ of the correction
equations \eqref{multicorrectiontau} and
\eqref{multicorrectionsig}
with $K^{\prime}=X_{\perp}^TKX_{\perp}$ and
$K_{\tau}^{\prime}=X_{\perp}^TK_{\tau}X_{\perp}$, respectively,
where the columns of $X_{\perp}$ form an orthonormal
basis of the orthogonal complement $X_{j+1}^{\perp}$ of $span\{X_{j+1}\}$\footnote{
With $j=0$ , the columns of $X_{\perp}$ form an orthonormal basis
of $x_{1}^{\perp}$,
which coincides with the definition of $X_{\perp}$ in Section \ref{sec:3}.}.
For $tol>0$ in \eqref{converged}, a tedious but routine
derivation shows that the new corresponding bounds in \eqref{inaccrelat} and \eqref{inaccrelat2}
are simply the counterparts established for $tol=0$
plus $\OO(tol)$, and we omit details.
Following the same discussions in
Sections \ref{subsec:2}--\ref{subsec:3}, we stop the inner iterations
when the inner relative residual norms $r_{in}$ and $r_{in,\tau}$
satisfy \eqref{instopping} for a given $\tilde\varepsilon$.

With the approximate solution $\tilde t$ of either the modified
correction equation \eqref{multicorrectiontau} or
\eqref{multicorrectionsig}, we orthonormalize
it against $\widetilde X$ to generate the expansion vector
$\widetilde{x}_{+}=\frac{(I-\widetilde X\widetilde X^T)\tilde t}
{\|(I-\widetilde X\widetilde X^T)\tilde t\|}$, and update
$\widetilde X:=[\widetilde X,\tilde x_{+}]$. We then extract a new approximation
to the desired GSVD component with respect to the expanded
$\X=\mathrm{span}\{\widetilde X\}$.
By $\tilde t\perp Y_c$, we have $\widetilde{x}_{+}\perp Y_c$ and
thus $\X\perp Y_c$.

Once $(\tilde\alpha,\tilde\beta,\tilde u,\tilde v,\tilde x)$
has converged in the sense of \eqref{outstopping}, we add it to
the already converged partial GSVD $(C_c,S_c,U_c,V_c,X_c)$ and set $j:=j+1$.
Proceed in such a way until all the $\ell$ desired GSVD components are found.

Motivated by the authors' work \cite{jia2014inner}, we further improve
the above restart approach so as to compute the $(j+1)$-th
GSVD component more efficiently. A key observation is that
the current right searching subspace $\X$ generally provides
reasonably good information on the next desired generalized singular vector.
Therefore, we should fully exploit $\X$ by only purging the newly converged $\tilde x_j$ from the current $\X$
and retaining the resulting reduced subspace, denoted by $\X_{\rm new}$, as a good
initial searching subspace for the next desired GSVD component rather
than from scratch. 
We can do this in the following efficient and numerically stable way.

Let $(\Sigma_{G},\Sigma_{H},E,F,D)$ be the GSVD of
$(G,H)$ partitioned as \eqref{partitionr}, such that
$(\Sigma_{G,1},\Sigma_{H,1},E_1,F_1,D_1)=
(\tilde\alpha,\tilde\beta,e,f,d)$ is the GSVD component corresponding
to the current converged GSVD component
$(\tilde\alpha,\tilde\beta,\tilde u,\tilde v,\tilde x)$ of $(A,B)$.
Since the columns of $D$ are $(G^TG+H^TH)$-orthonormal, we obtain
\begin{eqnarray*}
D_2^T\widetilde X^T\tilde y&=&
  D_2^T\widetilde X^T(A^TA+B^TB)\tilde x
  =D_2^T\widetilde X^T(A^TA+B^TB)\widetilde Xd\\
  &=&D_2^T(G^TG+H^TH)d=\bm{0},
\end{eqnarray*}
that is, the columns of $\widetilde XD_2\perp\tilde y$.
Therefore, $\X_{\rm new}=span\{\widetilde XD_2\}$.
Let $D_2=Q_{d}R_{d}$ be the thin QR factorization of $D_2$.
Then the columns of $\widetilde X_{\rm new}=\widetilde XQ_{d}$ form
an orthonormal basis of $\X_{\rm new}$, so that from \eqref{partitionr}
we can obtain thin QR factorizations:
\begin{eqnarray*}
 A\widetilde X_{\rm new}&=&
 A\widetilde XQ_d=\widetilde UGD_2R_d^{-1}
 =\widetilde UE_2\cdot \Sigma_{G,2}R_d^{-1}, \\
 B\widetilde X_{\rm new}&=&
 B\widetilde XQ_d=\widetilde VHD_2R_d^{-1}
 =\widetilde VF_2\cdot \Sigma_{H,2}R_d^{-1}.
\end{eqnarray*}
Therefore, the columns of
$\widetilde U_{\rm new}=\widetilde UE_2$ and
$\widetilde V_{\rm new}=\widetilde VF_2$ form orthonormal bases of the new
initial left searching subspaces $\mathcal{U}_{\rm new}$ and $\V_{\rm new}$.
We then proceed to expand $\mathcal{X}_{\rm new}$,
$\mathcal{U}_{\rm new}$ and $\V_{\rm new}$ in regular ways.

\subsection{The CPF-JDGSVD algorithm: a pseudocode}\label{code}

The CPF-JDSVD algorithm requires the devices to compute
$A^T\underline u$, $B^T\underline v$ and
$A\underline x$, $B\underline x$ for arbitrary vectors
$\underline u$, $\underline v$ and $\underline x$,
a unit-length starting vector $x_0$ to generate one-dimensional $\X$,
the target $\tau$, the number $\ell$ of the desired GSVD components,
and the convergence tolerance $tol$.
It outputs a converged approximation
$(C_c,S_c,U_c,V_c,X_c)$ to the desired GSVD
$(C_{\ell},S_{\ell},U_{\ell},V_{\ell},X_{\ell})$ of $(A,B)$
associated with the $\ell$ generalized singular values
closest to the target $\tau$ that satisfies $AX_c\!=\!U_cC_c$,
$BX_c\!=\!V_cS_c$, $C_c^2\!+\!S_c^2\!=\!I_{\ell}$ and
\begin{equation}\label{converggsvd}
 \|A^TU_cS_c-B^TV_cC_c\|_F\leq\sqrt{\ell(\|A\|_1^2+\|B\|^2_1)}\cdot tol.
\end{equation}
The other optional parameters are the minimum and maximum dimensions
$k_{\min}$ and $k_{\max}$ of searching subspaces,
the tolerance $fixtol$ used to switch the solution
from  \eqref{correction2} to \eqref{correction},
and the accuracy requirement $\tilde\varepsilon$ on the
expansion vectors in \eqref{instopping}. We set the defaults of
these four parameters as $3$, $30$, $10^{-4}$ and $10^{-3}$, respectively.
Algorithm~\ref{algorithm:1} sketches our thick-restart CPF-JDGSVD algorithm
with deflation.

\begin{algorithm}[htbp]
\caption{CPF-JDGSVD with the target $\tau$.}
\renewcommand{\algorithmicrequire}{\textbf{Input:}}
\renewcommand{\algorithmicensure} {\textbf{Output:}}
\begin{algorithmic}[1]\label{algorithm:1}
\STATE{Initialization:\ Set $k=1$, $k_c=0$, $C_c=[\ \ ], S_c=[\ \ ], U_c=[\ \ ],
V_c=[\ \ ],X_c=[\ \ ]$ and $Y_c=[\ \ ]$.
Let $\widetilde U=[\ \ ]$, $\widetilde V=[\ \ ]$,
$\widetilde X=[\ \ ]$ and $x_{+}=x_0$.}

\WHILE{$k\geq0$}

\STATE{\label{step:3}Set $\widetilde X=[\widetilde X, x_{+}]$,
and update the QR factorizations
$A\widetilde X=\widetilde UG$ and
$B\widetilde X=\widetilde VH$.}

\STATE{\label{step:4}Compute the GSVD of $(G,H)$, label
the generalized singular values in increasing order according to their distances
from the target $\tau$,
and pick up $(\tilde\alpha,\tilde\beta,e,f,d)$ with
the generalized singular value
$\theta=\frac{\tilde\alpha}{\tilde\beta}$
closest to $\tau$.}

\STATE{\label{step:5}Compute the approximate generalized singular vectors
$\tilde u=\widetilde Ue$,
$\tilde v=\widetilde Vf$,
$\tilde x=\widetilde Xd$,
the residual $r=\tilde \beta A^T\tilde u
-\tilde\alpha B^T\tilde v$, and
$\tilde y=\tilde\alpha A^T\tilde u+\tilde\beta B^T\tilde v$.}

\STATE{
\textbf{if} $\|r\|\leq(\tilde\beta\|A\|_1+\tilde\alpha\|B\|_1)\cdot tol$
\textbf{then} (\romannumeral1) set $k_c=k_c+1$ and update
$C_c=\bsmallmatrix{C_c&\\&\tilde\alpha}$,
$S_c=\bsmallmatrix{S_c&\\&\tilde\beta}$,
$U_c=[U_c,\tilde u]$, $V_c=[V_c,\tilde v]$,
$X_c=[X_c,\tilde x]$ and $Y_c=[Y_c, \tilde y]$;
(\romannumeral2) \textbf{if} $k_c=\ell$  \textbf{then}
return
$(C_c,S_c,U_c,V_c,X_c)$ and stop;
(\romannumeral3) set $k=k-1$,  
purge $\tilde x$ from the current $\X$,
obtain the reduced subspaces, and go to step 3.
}

\STATE{Set $X_{p}=[X_c,\tilde x]$ and $Y_{p}=[Y_c,\tilde y]$.
\textbf{if} $\|r\|\leq(\tilde\beta\|A\|_1+
\tilde\alpha\|B\|_1)\cdot fixtol$ \textbf{then} solve the correction equation
\begin{equation*}
  (I-Y_pX_p^T)(A^TA-\theta^2 B^TB)(I-X_pY_p^T)t=-(I-Y_cX_c^T)r
  \quad\mbox{with}\quad t\perp Y_p
\end{equation*}
for an approximate solution $\tilde t$ by requiring that
the inner relative residual norm $\|r_{in}\|\leq 2\tilde\varepsilon$;
\textbf{else} solve the correction equation
\begin{equation*}
  (I-Y_pX_p^T)(A^TA-\tau^2 B^TB)(I-X_pY_p^T)t=-(I-Y_cX_c^T)r
  \quad\mbox{with}\quad t\perp Y_p
\end{equation*}
for an approximate solution $\tilde t$ by requiring that the inner
relative residual norm
$\|r_{in,\tau}\|\leq\min\{2c_{\tau},0.01\}\tilde\varepsilon$.
}

\STATE{\textbf{if} $k=k_{\max}$ \textbf{then} set $k=k_{\min}$,
and perform a thick-restart.}

\STATE{Orthonormalize $\tilde t$ against $\widetilde X$
to get the expansion vector $\tilde x_{+}$ and set $k=k+1$.}
\ENDWHILE
\end{algorithmic}
\end{algorithm}

\section{Numerical examples}\label{sec:7}

We now report numerical results on several problems
to illustrate the efficiency of Algorithm~{\ref{algorithm:1}}.
All the experiments were performed on an Intel (R) Core (TM) i7-7700
CPU 3.60 GHz with the main memory 8 GB and 4 cores using the
MATLAB R2020b with the machine precision
$\epsilon_{\rm mach} =2.22\times10^{-16}$
under the Windows 10 64-bit system.

Tables~\ref{table0} lists the test matrix pairs with some of their
basic properties, where we use sparse matrices from the SuiteSparse
Matrix Collection \cite{davis2011university} or their transposes,
denoted by the matrix names with the superscript $T$,
as our test matrices $A$, so as to ensure $m\geq n$,
and the matrices $B$ are the $n\times n$ tridiagonal Toeplitz matrix
$B_0$ with $3$ and $1$ being the main and off diagonal elements
and the $(n-1)\times n$ scaled discrete approximation
$B_1$ of the first order derivative operator of
dimension one \cite{hansen1998rank}, i.e.,
\begin{equation}\label{defB}
B_0=\begin{bmatrix}3&1&&\\1&\ddots&\ddots&
\\&\ddots&\ddots&1 \\[0.2em]&&1&3\end{bmatrix}
\qquad\mbox{and}\qquad
B_1=\begin{bmatrix}1&-1&&\\&\ddots&\ddots&
\\&&1&-1 \end{bmatrix}.
\end{equation}
In order to verify the reliability and behavior of CPF-JDGSVD,
we have used the MATLAB built-in functions {\sf gsvd} and {\sf eig}
to compute the GSVD of the first five and last six test problems
for $m\approx n$ roughly and $m\gg n$, respectively, with
{\sf eig} applied to $(A^TA,B^TB)$.
For the three large matrix pairs $(A,B)=(\mathrm{tmgpc1},B_0)$,
$(\mathrm{ wstn\_1}^T,B_0)$ and $(\mathrm{degme}^T,B_0)$,
we have applied the MATLAB built-in function {\sf svds} to the
stacked matrix $\bsmallmatrix{A\\B}$ to compute its largest and
smallest singular values and obtained $\kappa\left(\bsmallmatrix{A\\B}\right)$.
We have used the MATLAB built-in function {\sf eigs} to
$(A^TA,B^TB)$ to compute the largest and smallest generalized singular
values of $(A,B)$. Particularly, the smallest generalized singular values
$\sigma_{\min}=0$ of $(\mathrm{tmgpc1},B_0)$ and
$(\mathrm{stat96v5}^T,B_1)$
since $\mathrm{tmgpc1}$ and $\mathrm{stat96v5}^T$
are known to be rank deficient \cite{davis2011university}.

\begin{table}[tbhp]
\caption{Properties of the test matrix pairs $(A,B)$, where $B_0$
and $B_1$ are defined by \eqref{defB},
`$\mathrm{tmgpc1}$', `$\mathrm{wstn\_1}$' and `flower54' are
abbreviations of `tomographic1', `waston\_1' and `flower\_5\_4',
respectively, $nnz$ is the total number of nonzero entries in
$A$ and $B$, and $\sigma_{\max}$ and $\sigma_{\min}$ are the largest and smallest
generalized singular values of $(A,B)$, respectively.}\label{table0}
\begin{center}
\begin{tabular}{ccccccccc} \toprule
{$A$}&{$B$}&$m$&$n$&$p$&$nnz$&$\kappa(\bsmallmatrix{A\\B})$
&$\sigma_{\max}$&$\sigma_{\min}$\\ \midrule
$\mathrm{r05}^T$      &$B_0$     &9690    &5190   &5190   &119713
                      &13.2    &17.4    &3.10e-2     \\
$\mathrm{deter4}^T$   &$B_0$     &9133    &3235   &3235   &28934
                      &7.07    &8.56    &5.62e-3    \\
$\mathrm{lp\_bnl2}^T$ &$B_0$     &4486    &2324   &2324   &21966
                      &1.93e+2 &1.10e+2 &1.20e-2  \\
$\mathrm{large}^T$    &$B_0$     &8617    &4282   &4282   &33479
                      &3.53e+3 &2.40e+3 &2.25e-3   \\
$\mathrm{gemat1}^T$   &$B_0$     &10595   &4929   &4929   &61376
                      &1.82e+4 &1.21e+4 &5.97e-5   \\
$\mathrm{tmgpc1}$     &$B_0$     &73159   &59498  &59498  &825987
                      &7.94    &2.78    &0 \\
$\mathrm{wstn\_1}^T$  &$B_0$     &\!\!386992\!\!  &\!\!201155\!\!
                      &\!\!201155\!\!   &\!\!1658556\!\!
                      &15.1    &11.1&5.19e-3 \\
$\mathrm{degme}^T$    &$B_0$     &\!\!659415\!\!  &\!\!185501\!\!
                      &\!\!185501\!\!   &\!\!8684029\!\!
                      &2.04e+2 &9.24e+2 &1.24  \\
$\mathrm{slptsk}^T$   &$B_1$   &3347    &2861   &2860   &78185
                      &3.29e+2 &1.16e+4 &1.81e-1   \\
$\mathrm{rosen10}^T$  &$B_1$   &6152    &2056   &2055   &68302
                      &1.57e+2 &2.02e+4 &1.24  \\
$\mathrm{flower54}^T$ &$B_{1}$   &14721   &5226   &5225   &54392
                      &4.52    &7.73e+3 &2.42e-1  \\
$\mathrm{l30}^T$      &$B_1$   &16281   &2701   &2700   &57470
                      &7.76    &2.53e+3 &2.22e-3 \\
$\mathrm{cq5}^T$      &$B_1$   &11748   &5048   &5047   &61665
                      &8.91e+3 &7.34e+4 &3.78e-2 \\
$\mathrm{stat96v5}^T$ &$B_1$   &75779   &2307   &2306   &238533
                      &31.7    &5.55e+3 &0 \\
\bottomrule
\end{tabular}
\end{center}
\end{table}

For each matrix pair $(A,B)$ with a given target $\tau$,
we compute the GSVD components of $(A,B)$ corresponding
to the $\ell$ generalized singular values closest to $\tau$,
where $\ell=1,5$, and $9$.
We take the initial vector $x_0$ for the problems with the full column rank $B$
to be $\frac{1}{\sqrt{n}} [1,1,\ldots,1]^T$,
and $x_0$ for the problems with $B$ rank deficient to be the
unit-length vector whose primitive $i$-th
element is $(i\mod 4)$, $i=1,\dots,n$.
An approximate GSVD component
$(\tilde{\alpha},\tilde{\beta},\tilde{u},
\tilde{v},\tilde{x})$
obtained by the CPF-JDGSVD algorithm is claimed to have converged
if its residual norm satisfies \eqref{converged} with $tol=10^{-10}$.
For the inner iterations, we use the unpreconditioned MINRES to solve the
correction equation
\eqref{multicorrectionsig} or \eqref{multicorrectiontau},
where the code {\sf minres} is from MATLAB R2020b.
We always take the initial approximate solutions to be zero vectors
and stop the inner iterations when the
stopping criterion \eqref{instopping} is fulfilled for a
fixed $\widetilde\varepsilon$. Unless specified otherwise, we take the parameters
in CPF-JDGSVD to be the defaults in Section~\ref{code}.


As a comparison, we also compute the desired GSVD components of $(A,B)$ with the full column rank $B$ using the JDGSVD algorithm \cite{hochstenbach2009jacobi}
with the parameters as far as possible the same as in the thick-restart CPF-JDGSVD.
We always take $u_0=\frac{1}{\sqrt{m}}[1,\dots,1]^T$
as the initial left vector for JDGSVD, which works on the generalized eigenvalue problem of
$\left(\bsmallmatrix{&A\\A^T&},\bsmallmatrix{I&\\&B^TB}\right)$
for the full column rank $B$.
At each step, the JDGSVD algorithm computes an approximation
$(\vartheta, \hat u, \hat w)$ to the desired triplet $(\sigma,u,w:=x/\beta)$.
In our implementations, an approximate $(\vartheta, \hat u, \hat w)$ is claimed to
have converged if the residual
$$
r=\begin{bmatrix}A\hat w-\vartheta\hat u\\
           A^T\hat u-\vartheta B^TB\hat w\end{bmatrix}=
           \left(\begin{bmatrix}&A\\A^T&\end{bmatrix}-\vartheta \begin{bmatrix}I&\\&B^TB \end{bmatrix}\right)
           \begin{bmatrix}\hat u\\ \hat w\end{bmatrix}
$$
of the approximate generalized eigenpair $(\vartheta,(\hat{u}^T,\hat{w}^T)^T)$
satisfies
\begin{equation}\label{resJDGSVD}
  relres:=\frac{\|r\|}{(\|A\|_1+\vartheta\|B^TB\|_1)\sqrt{1+\|\hat w\|^2}}\leq tol
\end{equation}
with $tol$ the same as in \eqref{outstopping}.
For the correction equations (cf. the equation after equation (13)
in \cite{hochstenbach2009jacobi}.),
we take $\theta$ to be $\tau$ in the initial steps and then
switch to the approximate generalized singular value $\vartheta$ when
the relative residual norm $relres$ is smaller than the same switching
tolerance $fixtol$ in CPF-JDGSVD. We always take zero vector as the initial guess and
use {\sf minres} to solve the symmetric correction
equations until the relative residual norm of the inner iterations is
smaller than $2\widetilde{\varepsilon}$, where
$\widetilde{\varepsilon}$ is the same as in \eqref{accsigma} and \eqref{acctau}.
To make a fair comparison, we have introduced the thick-restart and
deflation technique similar to those
described in Section~\ref{sec:5} into JDGSVD for computing $\ell$ GSVD
components of $(A,B)$. Once a triplet $(\vartheta,\hat u,\hat w)$ has converged,
the corresponding converged approximate GSVD component is recovered by
\begin{equation*}
  (\hat\alpha,\hat\beta,\hat u,\hat v,\hat x)=
  (\frac{\vartheta}{\sqrt{1+\vartheta^2}},\frac{1}
  {\sqrt{1+\vartheta^2}},\hat u,\frac{1}{\|B\hat w\|}B\hat w,
  \frac{1}{\sqrt{\|A\hat w\|^2+\|B\hat w\|^2}}\hat w).
\end{equation*}
As an approximation to the desired GSVD component $(\alpha,\beta,u,v,x)$ of
$(A,B)$, for the {\em original} GSVD problem, the associated true relative residual
norm of $(\hat\alpha,\hat\beta,\hat u,\hat v,\hat x)$ is
\begin{equation}\label{truerelres}
  relres_{t}=
  \frac{\|A\hat x-\hat\alpha\hat u\|}{\|A\|_1\|\hat x\|+\hat\alpha}+
  \frac{\|B\hat x-\hat\beta\hat v\|}{\|B\|_1\|\hat x\|+\hat\beta}+
  \frac{\|\hat\beta A^T\hat u-\hat\alpha B^T\hat v\|}
  {\hat\beta\|A\|_1+\hat\alpha\|B\|_1}.
\end{equation}
For the CPF-JDGSVD algorithm, the first two terms in the right-hand side vanish.

In all the tables, we denote by $I_{out}$ and $I_{in}$ the total numbers of
outer and inner iterations, respectively,
and by $T_{cpu}$ the CPU time in seconds counted by the MATLAB
built-in commands {\sf tic} and {\sf toc}.

\begin{exper}\label{exp1}
We compute the GSVD components of $(A,B)=(\mathrm{r05}^T,B_0)$
with $\ell=1,5,9$ corresponding to the generalized singular values closest to
$\tau=4$ using the CPF-JDGSVD algorithm with $fixtol=+\infty, 10^{-2},10^{-4}$
and $0$, respectively.
Here $fixtol=+\infty$ or $0$ is a virtual value and means that
we always solve the modified correction equation
\eqref{multicorrectionsig} or \eqref{multicorrectiontau} only.
The desired generalized singular values of $(A,B)$ are
clustered interior ones.
\end{exper}

\begin{table}[tbhp]
\caption{$(A,B)=(\mathrm{r05}^T,B_0)$ with $\tau=4$.}\label{table1}
\begin{center}
\begin{tabular}{cccccccccc} \toprule
\multirow{2}{*}{$fixtol$}
&\multicolumn{3}{c}{$\ell=1$}
&\multicolumn{3}{c}{$\ell=5$}
&\multicolumn{3}{c}{$\ell=9$}\\
\cmidrule(lr){2-4} \cmidrule(lr){5-7} \cmidrule(lr){8-10}
&$I_{out}$&$I_{in}$&$T_{cpu}$
&$I_{out}$&$I_{in}$&$T_{cpu}$
&$I_{out}$&$I_{in}$&$T_{cpu}$\\ \midrule
$+\infty$  &207&1044081&363 &319&1613267&565  &341&1707652&605\\
$10^{-2}$  &7&1070&0.38     &28&4889&1.88     &50&12777&5.20 \\
$10^{-4}$  &10&1212&0.46    &36&5116&2.07     &71&19217&8.05 \\
$0$        &13&1587&0.59    &65&6933&2.86     &337&30534&14.8 \\
\bottomrule
\end{tabular}
\end{center}
\end{table}

Table~\ref{table1} reports the results. Clearly, for the three $\ell$,
CPF-JDGSVD with $fixtol=+\infty$ uses much more outer and inner iterations and much
more CPU time to converge than it does for the other three $fixtol$.
What is worse, none of the converged generalized singular values
$\sigma_{c,i}\in[1.93,2.17]$ is a desired one since the $\ell$
desired $\sigma_{i}\in[3.68,4.47]$, meaning that the algorithm misconverges.
As a matter of fact, CPF-JDGSVD  with $fixtol=10^{-2}$ has the
same issue: the first converged generalized singular value
$\sigma_{c,1}\approx4.12$ is not the closest to $\tau$ but the
second converged $\sigma_{c,2}\approx3.92$ is.
In contrast, CPF-JDGSVD with $fixtol=0$ converges correctly thought it
uses more outer iterations than CPF-JDGSVD with
$fixtol=10^{-2}$.
CPF-JDGSVD with $10^{-4}$ works reliably and uses much fewer outer iterations
than it does with $fixtol=0$.
The results indicate that in order to make the algorithm reliable and
efficient, one should take a relatively small $fixtol$.

As we observe from the table, for $\ell=9$, CPF-JDGSVD with $fixtol=10^{-4}$ converges
significantly faster than it does with $fixtol=0$, and the total
inner iterations are substantially reduced as well.
Obviously, with an inappropriately larger or smaller $fixtol$,
CPF-JDGSVD may compute wrong GSVD components or converge
very slowly. We have also observed the same phenomena on
other test matrix pairs.
A good choice of $fixtol$ must guarantee the reliability of the
computed GSVD components and, meanwhile, should reduce the total
computational costs as much as possible. Such a choice is obviously problem
dependent. Nonetheless, we have found from the experiments on the other
problems that, for the reliability and efficiency
of CPF-JDGSVD, $fixtol=10^{-4}$ is a good choice and
is used as a default.

\begin{exper}\label{exp2}
We compute the GSVD components of $(A,B)=(\mathrm{deter4}^T,B_0)$ with $\ell=1,5,9$
corresponding to the clustered interior generalized singular
values closest to $\tau=0.08$ using the CPF-JDGSVD
algorithm with $\widetilde\varepsilon=10^{-3},10^{-4},10^{-15}$
in \eqref{instopping}, where $\widetilde\varepsilon=10^{-15}$ means that
all the correction equations have been numerically solved exactly
in finite precision arithmetic.
For the experimental purpose, we have also used the so-called
``exact'' CPF-JDGSVD algorithm to compute the desired GSVD components, where
``exact'' means, as indicated by \eqref{expressiony} and \eqref{defnu},
that the correction equations
\eqref{multicorrectionsig} and \eqref{multicorrectiontau}
are solved by the LU factorizations of  $L=A^TA-\theta^2 B^TB$
and $L_{\tau}=A^TA-\tau^2 B^TB$, respectively.
\end{exper}

\begin{table}[tbhp]
\caption{$(A,B)=(\mathrm{deter4}^T,B_0)$ with $\tau=0.08$. 
}\label{table2}
\begin{center}
\begin{tabular}{cccccccccc} \toprule
\multirow{2}{*}{$\tilde\varepsilon$}
&\multicolumn{3}{c}{$\ell=1$}
&\multicolumn{3}{c}{$\ell=5$}
&\multicolumn{3}{c}{$\ell=9$}\\
\cmidrule(lr){2-4} \cmidrule(lr){5-7} \cmidrule(lr){8-10}
&$I_{out}$&$I_{in}$&$T_{cpu}$
&$I_{out}$&$I_{in}$&$T_{cpu}$
&$I_{out}$&$I_{in}$&$T_{cpu}$\\ \midrule
$10^{-3}$  &24&2082&0.29  &38&8790&1.02  &49&13711&2.03\\
$10^{-4}$  &24&2586&0.27  &33&8494&0.93  &47&13748&2.03\\
$10^{-15}$ &23&7192&0.72  &37&19199&2.00 &46&25972&3.36 \\
exact      &23&-&2.04     &37&-&3.32     &46&-&4.22 \\ \bottomrule
\end{tabular}
\end{center}
\end{table}

\begin{figure}[tbhp]
\centering
\label{fig1b}\includegraphics[width=0.85\textwidth]{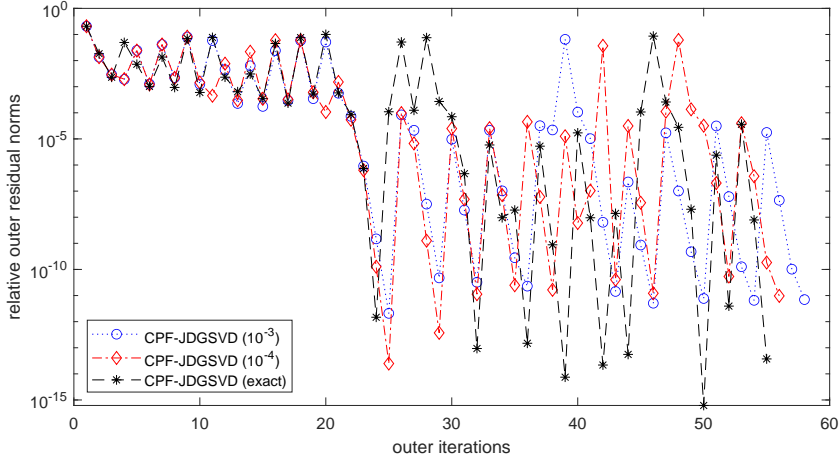}
\caption{Computing the GSVD components of $(A,B)=(\mathrm{deter4}^T,B_0)$
corresponding to the nine generalized singular values closest to $\tau=0.08$.}
\label{fig1}
\end{figure}

Tabel~\ref{table2} reports the results, and Figure~\ref{fig1} depicts
the convergence curves of CPF-JDGSVD with $
\widetilde\varepsilon=10^{-3}$, $10^{-4}$
and the ``exact'' CPF-JDGSVD for computing nine GSVD components of $(A,B)$,
where the GSVD components are computed one by one and the
convergence curve, therefore, has nine stages for
each $\widetilde\varepsilon$. We observe from Table~\ref{table2} and
Figure~\ref{fig1} that,
regarding the outer iterations, for $\ell=1,5$ and $9$,
CPF-JDGSVD with $\widetilde\varepsilon=10^{-3}$
and $10^{-4}$ behaves very much like its exact counterpart.
Furthermore, we have found that, compared with the iterative exact CPF-JDGSVD,
i.e., $\widetilde\varepsilon\!=\!10^{-15}$,
the inexact CPF-JDGSVD algorithm costs only less than $53\%$
of total inner iterations or less than $61\%$ of total CPU time
to compute the desired GSVD components.
Clearly, a smaller $\widetilde\varepsilon$ is unnecessary since it cannot
reduce outer iterations and instead increases the total cost substantially.
Therefore, in the sequel, we
adopt the default $\widetilde\varepsilon\!=\!10^{-3}$ in CPF-JDGSVD and
JDGSVD.

\begin{exper}\label{exp3}
 We compute the GSVD components of some other problems in Table~\ref{table0}.
 We write the matrix pairs $(A_a,B_a)\!=\!(\mathrm{lp\_bnl2}^T,B_0)$,
$(A_b,B_b)\!=\!(\mathrm{large}^T,B_0)$ and
$(A_c,B_c)\!=\!(\mathrm{gemat1}^T,B_0)$
with the targets $\tau_a=20$, $\tau_b=5$ and $\tau_c=12$, respectively.
The desired GSVD components are all clustered interior ones.
We also test the large scale matrix pairs
$(A_d,B_d)\!=\!(\mathrm{tmgpc1},B_0)$, $(A_e,B_e)\!=\!(\mathrm{wstn\_1}^T,B_0)$
and $(A_f,B_f)\!=\!(\mathrm{degme}^T,B_0)$
with $\tau_d=2.7$, $\tau_e=7$ and $\tau_f=1.3$, respectively.
The desired GSVD components correspond to the
largest, interior and smallest ones of
$(A_d,B_d)$, $(A_e,B_e)$ and $(A_f,B_f)$, respectively.
\end{exper}

\begin{table}[tbhp]
\caption{Results of CPF-JDGSVD on some of the problems in Table \ref{table0}.}\label{table3}
\begin{center}
\begin{tabular}{cccccccccc} \toprule
\multirow{2}{*}{$A$}
&\multicolumn{3}{c}{$\ell=1$}
&\multicolumn{3}{c}{$\ell=5$}
&\multicolumn{3}{c}{$\ell=9$}\\
\cmidrule(lr){2-4} \cmidrule(lr){5-7} \cmidrule(lr){8-10}
&$I_{out}$&$I_{in}$&$T_{cpu}$
&$I_{out}$&$I_{in}$&$T_{cpu}$
&$I_{out}$&$I_{in}$&$T_{cpu}$\\ \midrule
$\mathrm{lp\_bnl2}^T$
&11&497&0.09
&38&1841&0.28
&68&\!\!3752\!\!&0.54\\
$\mathrm{large}^T$
&7&\!\!9528\!\!  &1.71
&47&\!\!76190\!\!&13.7
&61&\!\!132107\!\!&25.6 \\
$\mathrm{gemat1}^T$
&8&\!\!6900\!\!&1.90
&23&\!\!22257\!\!&6.71
&40&\!\!33336\!\!&10.7 \\
$\mathrm{tmgpc1}$
&8&403&1.82
&27&1234&6.31
&54&2752&15.0 \\
$\mathrm{wstn\_1}^T$
&13 &\!\!\!\!35982\ \ \!\!\!\!  &\!\!\!\!5.29e+2\!\!\!\!
&32 &\!\!\!\!106212\ \ \!\!\!\! &\!\!\!\!1.90e+3\!\!\!\!
&51 &\!\!\!\!167817\ \ \!\!\!\! &\!\!\!\!3.46e+3\!\!\!\! \\
$\mathrm{degme}^T$
&\!\!8\!\!&92&5.16
&\!\!112\!\!&1399&83.3
&\!\!261\!\!&3347&\!\!2.01e+2\!\!\\ \bottomrule
\end{tabular}
\end{center}
\end{table}

\begin{table}[tbhp]
\caption{Results of JDGSVD on some of the problems in Table \ref{table0}.}\label{table4}
\begin{center}
\begin{tabular}{ccccccccccl} \toprule
\multirow{2}{*}{$A$}
&\multicolumn{3}{c}{$\ell=1$}
&\multicolumn{3}{c}{$\ell=5$}
&\multicolumn{4}{c}{$\ell=9$}\\
\cmidrule(lr){2-4} \cmidrule(lr){5-7} \cmidrule(lr){8-11}
&\!\!$I_{out}$\!\!&$I_{in}$&$T_{cpu}$
&\!\!$I_{out}$\!\!&$I_{in}$&$T_{cpu}$
&\!\!$I_{out}$\!\!&$I_{in}$&$T_{cpu}$ & $Relres_t$\\ \midrule
\!\!$\mathrm{lp\_bnl2}^T$\!\!
&\!\!10\!\!&\!\!918\!\!&0.20
&\!\!37\!\!&\!\!3023\!\!&0.78
&\!\!64\!\!&\!\!5640\!\!&1.51
&2.19e-10\!\! \\
\!\!$\mathrm{large}^T$\!\!
&\!\!7\!\!&\!\!9786\!\!&2.80
&\!\!26\!\!&\!\!71127\!\!&24.4
&\!\!43\!\!&\!\!137692\!\!&49.7
&2.08e-10 \\
\!\!$\!\!\mathrm{gemat1}^T$\!\!
&\!\!13\!\!&\!\!10846\!\!&5.59
&\!\!31\!\!&\!\!24986\!\!&13.6
&\!\!48\!\!&\!\!36553\!\!&20.4
&1.75e-10\!\! \\
\!\!$\mathrm{tmgpc1}$\!\!
&\!\!8\!\!&\!\!807\!\!&6.29
&\!\!24\!\!&\!\!1987\!\!&17.2
&\!\!42\!\!&\!\!4282\!\!&40.4
&1.25e-9\!\! \\
\!\!$\mathrm{wstn\_1}^T$\!\!
&\!\!10\!\!&\!\!\!\!51244\!\!\!\!& \!\!2.17e+3\!\!\!\!
&\!\!23\!\!&\!\!\!\!131417\!\!\!\!&\!6.40e+3\!\!
&\!\!41\!\!&\!\!\!\!206423\!\!\!\!&\!\!1.11e+4\!\!\!\!
&4.25e-10\!\! \\
\!\!$\mathrm{degme}^T$\!\!
&\!\!10\!\!&\!\!582\!\!&\!\!40.4\!\!
&\!\!\!\!353\!\!\!\!&\!\!\!\!20673\!\!\!\!&\!1.75e+3\!\!
&\!\!\!\!998\!\!\!\!&\!\!\!\!64200\!\!\!\!&\!\!\!\!6.79e+3\!\!\!\!
&1.22e-9\!\! \\
\bottomrule
\end{tabular}
\end{center}
\end{table}


For these six test problems, we have observed very similar phenomena
to those for the previous two examples.
Table~\ref{table3} reports the results, where the results on $(A_e,B_e)$
are obtained by CPF-JDGSVD with
the fixed correction equation \eqref{multicorrectiontau} solved,
since for this problem we have noticed that CFP-JDGSVD with $fixtol=10^{-4}$
used too many inner iterations but comparable outer iterations.
Table~\ref{table3} indicates that CPF-JDGSVD worked efficiently
for computing both the interior and the extreme GSVD components of the test
matrix pairs.
Particularly, we have seen that the outer iterations for $\ell=5$ and $9$
are only slightly more than those for $\ell=1$, confirming the
effectiveness of the restarting scheme proposed in Section 4.2,
where the reduced $\widetilde{\mathcal{X}}_{\rm new}$'s of
purging the converged right generalized singular vectors from
the current subspaces indeed retain
rich information on the next desired right generalized singular
vectors.

For these six problems, all the $B$ are well conditioned
with $\kappa(B)\approx5$.
We have also applied the JDGSVD algorithm \cite{hochstenbach2009jacobi}
to these problems
with all the parameters same as in the CPF-JDGSVD algorithm.
Table~\ref{table4} displays the results, where $Relres_t$ is
the relative residual norm whose entries are
the $\ell$ $relres_t$ of converged GSVD components defined by \eqref{truerelres}.
We see from Table~\ref{table4} that for all the six problems,
the relative residual norms of the converged approximate GSVD
components computed by JDGSVD are very comparable to the
stopping tolerance $10^{-10}$, as is expected since matrices $B$ are very
well conditioned.
Comparing Table~\ref{table3} with Table~\ref{table4}, we can see that
CPF-JDGSVD uses very comparable outer iterations as JDGSVD for the
first five matrix pairs but it is at least three times as fast as
JDGSVD for the last problem with $\ell=5,9$.
Regarding the overall efficiency, CPF-JDGSVD uses
fewer inner iterations or less than $52\%$ of CPU time to compute
the desired nine GSVD components of $(A_b,B_b)$, $(A_c,B_c)$ and $(A_e,B_e)$.
It reduces more than $33\%$ of inner iterations and more than
$63\%$ of CPU time to compute all the desired GSVD components of $(A_a,B_a)$ and $(A_b,B_b)$. For $(A_f,B_f)$,
CPF-JDGSVD significantly outperforms JDGSVD by using
$5\%$ of inner iterations and $3\%$ of CPU time to converge.
Therefore, for the matrix pairs with the full column rank and well-conditioned $B$,
CPF-JDGSVD is more efficient than or at
least competitive with JDGSVD.

\begin{exper}\label{exp4}
We use CPF-JDGSVD and JDGSVD to compute nine GSVD components 
of  $(A,B)=(\mathrm{blckhole},B_1^T)$ corresponding to
the generalized singular values closest to $\tau=1000$,
where $\mathrm{blckhole}$ is a $2132\times 2132$ sparse matrix 
from \cite{davis2011university} and $B=B_1^T$ ensures that $B$ has full column rank.
The desired generalized singular values are the largest
ones of $(A,B)$ and are well separated {\color{red} from} each other.
\end{exper}

\begin{figure}[tbhp]
\centering
\subfloat[]{\label{fig2a}
\includegraphics[width=6cm,height=4cm]{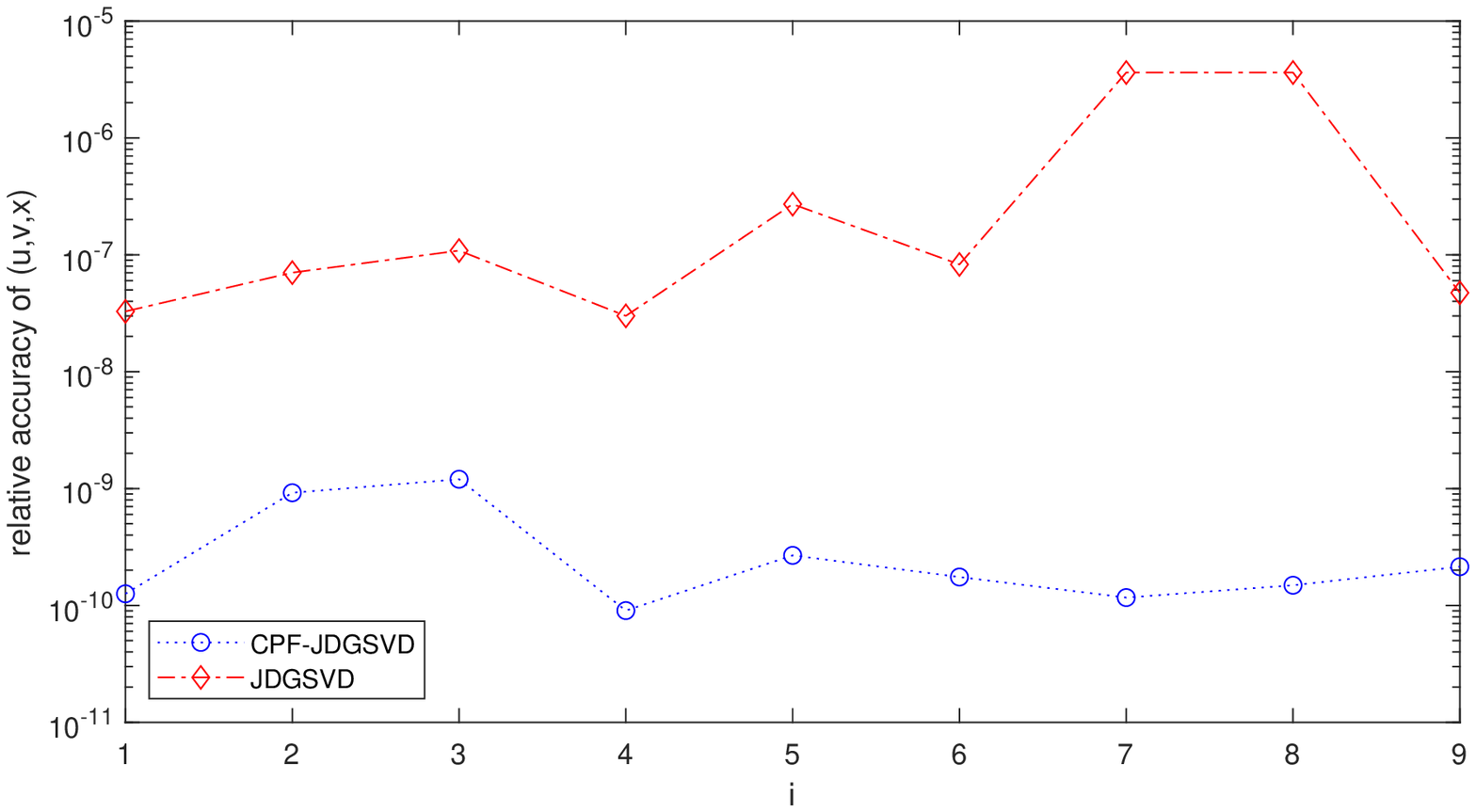}}
\hfill
\subfloat[]{\label{fig2b}
\includegraphics[width=6cm,height=4cm]{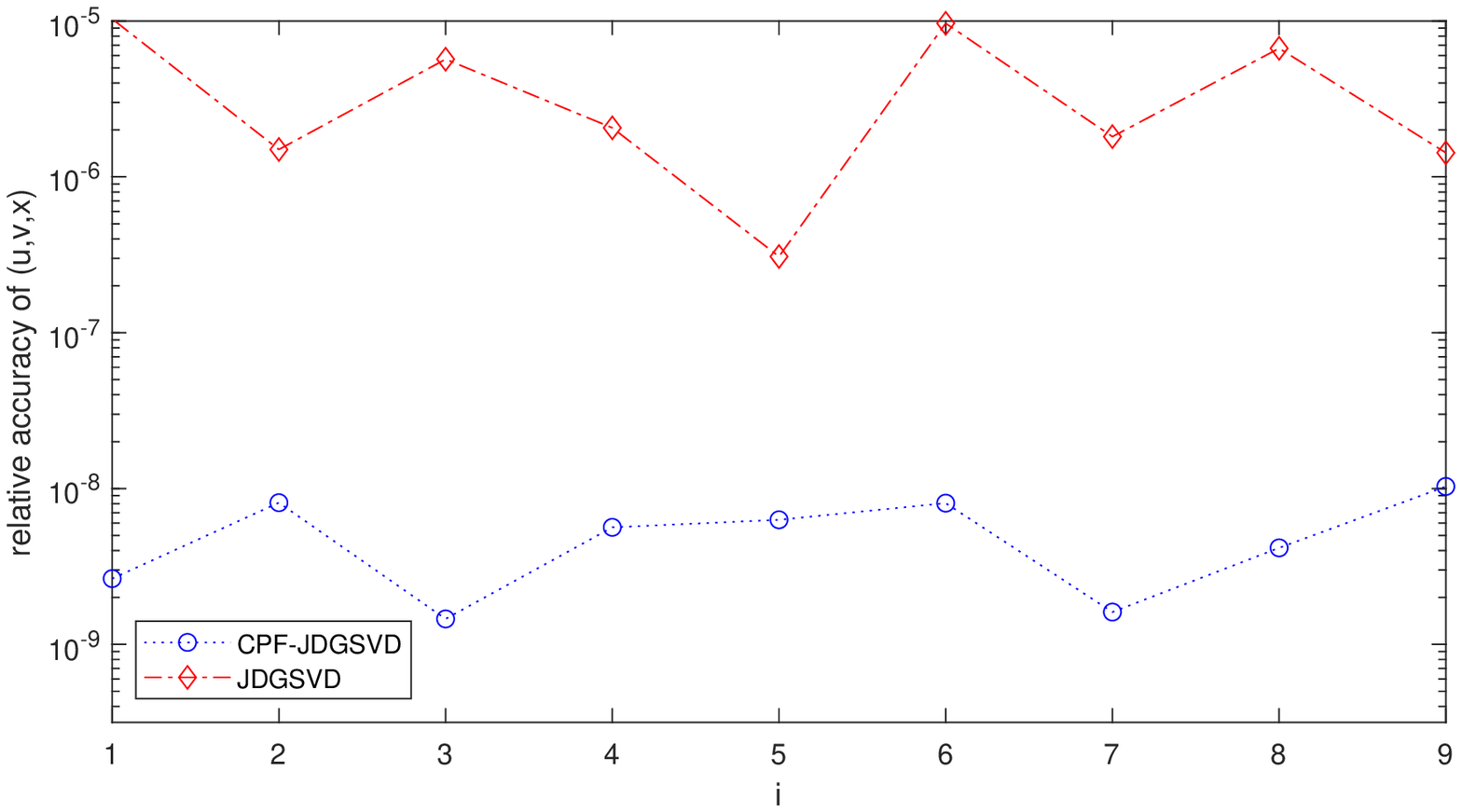}}
\caption{(a) and (b) Accuracy of the converged generalized singular vectors of
$(A,B)=(\mathrm{blckhole},B_1^T)$ with $\tau=10^3$ and
$(A,B)=(\mathrm{plddb}^T,B_2^T)$ with $\tau=70$ computed by CPF-JDGSVD and JDGSVD, respectively.}
\label{fig2}
\end{figure}

\begin{table}[tbhp]
\caption{Results on $A=\mathrm{blckhole}$ and $B=B_1^T$.}\label{table5}
\begin{center}
\begin{tabular}{ccccccccc} \toprule
\multirow{2}{*}{Matrix Pair}& \multirow{2}{*}{$\tau$}
&\multicolumn{3}{c}{CPF-JDGSVD}
&\multicolumn{4}{c}{JDGSVD}\\
\cmidrule(lr){3-5} \cmidrule(lr){6-9}
&&$I_{out}$&$I_{in}$&$T_{cpu}$
&$I_{out}$&$I_{in}$&$T_{cpu}$&$Relres_t$ \\ \midrule
$(A,B)$&$10^{+3}$  &41&33671&2.07   &658&347376&61.0 &1.96e-8 \\
$(B,A)$&$10^{-3}$  &36&27441&1.67   &2132&1583593&3.45e+2 &1.33e-3 \\
\bottomrule
\end{tabular}
\end{center}
\end{table}

For this problem, both $A$ and $B$ are well conditioned with
$\kappa(A)=4.17\times 10^3$, $\kappa(B)=1.36\times 10^3$ and $\kappa(\bsmallmatrix{A\\B})=26.6$.
The largest and smallest generalized singular values of $(A,B)$ are $4.68\times 10^3$
and $1.21\times 10^{-3}$, respectively.
Table~\ref{table5} displays the results, and Figure~\ref{fig2a} depicts
the accuracy of the converged generalized singular vectors
obtained by CPF-JDGSVD and JDGSVD,
where the accuracy of a converged generalized singular vector triplet
$(\tilde u,\tilde v,\tilde x)$ is measured by
\begin{equation}\label{accuvector}
  \epsilon_{(\tilde u,\tilde v,\tilde x)}=\sqrt{
  \sin^2\angle(\tilde u,u)+\sin^2\angle(\tilde v,v)+\sin^2\angle(\tilde x,x)}
\end{equation}
with the ``exact'' generalized singular vectors $(u,v,x)$ computed by
{\sf gsvd}.

We see from Table~\ref{table5} that
CPF-JDGSVD is much more efficient than JDGSVD, and it
uses less than $10\%$ of outer and inner iterations
and less than $4\%$ of CPU time than the latter.
We have observed that all the generalized singular values computed by the two
algorithms are accurate with the relative errors lying in $[10^{-15},10^{-11}]$.
However, as can be seen from Figure~\ref{fig2},
all the desired
generalized singular vectors computed by CPF-JDGSVD are very accurate
by recalling that we have used the stopping tolerance $tol=10^{-10}$,
and they are two to nearly five orders more accurate than those computed by JDGSVD.
Indeed, we see from Table~\ref{table5} that the relative residual norms of
the converged approximate GSVD components obtained by JDGSVD are a few orders larger than the stopping tolerance $10^{-10}$.
This shows that transforming the GSVD problem into
the generalized eigenvalue problems in \cite{hochstenbach2009jacobi}
is not a general-purpose good choice since a backward stable
algorithm for the generalized eigenvalue problems
cannot produce backward stable approximate GSVD components of
the original GSVD problem, especially when $\kappa(A)$ or $\kappa(B)$
is not small, as proved in \cite{huang2020choices}.
In addition, we have observed that CPF-JDGSVD successively computed
the desired GSVD components of $(A,B)$ one by one correctly
while JDGSVD only succeeded to compute the first six desired GSVD components and
then repeatedly computed the first one after the six ones had converged.
This phenomenon occurs since $B^TB$ is quite ill conditioned with
$\kappa(B^TB)=1.85\times 10^6$ and the right searching subspace
involved in JDGSVD, which should be made $B^TB$-orthogonal to
the converged right generalized singular vectors by solving some
appropriate correction equations, loses $B^TB$-orthogonality to the
converged right generalized singular subspace, so that the information
on the converged GSVD component reappeared and caused repeated
computation of the same GSVD component.

Since the GSVD of $(A,B)$ is equivalent to that of $(B,A)$, we have
also applied CPF-JDGSVD and JDGSVD to $(B,A)$ with
$\tau=10^{-3}$ to compute nine GSVD components of $(B,A)$.
We have found that JDGSVD successfully computes the first eight desired GSVD
components of $(B,A)$. However, the desired generalized singular values of
$(A,B)$ corresponds to the smallest clustered ones of $(B,A)$. It may be
this reason that made
that JDGSVD fail to compute the ninth desired GSVD component when
total $n$ outer iterations have been used and the relative residual norm of
the computed approximate GSVD component could not drop below $10^{-4}$
after $2132$ outer iterations were exhausted.
In contrast, as we see from Table~\ref{table5}, CPF-JDGSVD succeeds
to compute all the desired GSVD components accurately and uses
even fewer outer and inner iterations and less CPU time
than it does when applied to $(A,B)$ with $\tau=10^{3}$.


\begin{exper}\label{exp5}
  We use CPF-JDGSVD and JDGSVD to compute the nine GSVD components
  of the matrix pair $(A,B)=(\mathrm{plddb}^T,B_2^T)$ corresponding to
  the generalized singular values closest to $\tau=70$, where $B_2$ is the
  $n\times(n+2)$ scaled discrete approximation of the second order derivation operator of dimension one:
  \begin{equation*}
    B_2=\begin{bmatrix}-1&2&-1&&
    \\&\ddots&\ddots&\ddots&
    \\&&-1&2&-1\end{bmatrix}
    \in\mathbb{R}^{n\times(n+2)}.
  \end{equation*}
  The desired generalized singular values are the interior ones of $(A,B)$ and highly clustered with each other.
\end{exper}



For this problem, $\kappa(A)=1.23\times10^4$,
$\kappa(B)=1.69\times10^{6}$,
and $\kappa(\bsmallmatrix{A\\B})=1.40\times 10^2$.
Therefore, both $A$ and $B$ are not well conditioned, and it is expected that JDGSVD cannot compute generalized
singular vectors accurately while CPF-JDGSVD works well.
We observe that CPF-JDGSVD uses $96$ outer iterations and
$119874$ inner iterations, about twice of
$44$ outer and $67330$ inner iterations used by JDGSVD.
All the generalized singular values computed by JDGSVD and CPF-JDGSVD are
very accurate with the relative errors lying in $[10^{-15},10^{-13}]$.
Unfortunately, as has been depicted in Figure~\ref{fig2b}, we see that
the generalized singular vectors computed by CPF-JDGSVD are very
accurate and are a few orders more accurate than those computed
by JDGSVD. We have also applied CPF-JDGSVD and JDGSVD to the
matrix pair $(B,A)$ with
$\tau=\frac{1}{70}$ and observed similar accuracy advantage of CPF-JDGSVD
over JDGSVD.


\begin{exper}\label{exp6}
We compute the GSVD components of the other
problems in Table~\ref{table0}:
$(A_a,B_a)\!=(\mathrm{slptsk}^T,B_1)$,
$(A_b,B_b)\!=(\mathrm{rosen10}^T,B_1)$,
$(A_c,B_c)\!=(\mathrm{flower54}^T, B_1)$,
$(A_d,B_d)=(\mathrm{l30}^T,B_1)$,
$(A_e,B_e)=(\mathrm{cq5}^T,B_1)$ and
$(A_f,B_f)=(\mathrm{stat96v5}^T,B_1)$
with the targets $\tau$ being $\tau_a=9 $, $\tau_b=4$, $\tau_c=82$,
$\tau_d=1$, $\tau_e=0.1$ and $\tau_f=4000$, respectively.
\end{exper}

\begin{table}[tbhp]
\caption{Results of applying CPF-JDGSVD to some of the problems in Table \ref{table0}.}\label{table6}
\begin{center}
\begin{tabular}{cccccccccc} \toprule
\multirow{2}{*}{$A$}
&\multicolumn{3}{c}{$\ell=1$}
&\multicolumn{3}{c}{$\ell=5$}
&\multicolumn{3}{c}{$\ell=9$}\\
\cmidrule(lr){2-4} \cmidrule(lr){5-7} \cmidrule(lr){8-10}
&$I_{out}$&$I_{in}$&$T_{cpu}$
&$I_{out}$&$I_{in}$&$T_{cpu}$
&$I_{out}$&$I_{in}$&$T_{cpu}$\\ \midrule
$\mathrm{slptsk}^T$
&7&6589&0.80
&32&23551&3.04
&67&41350&7.02 \\
$\mathrm{rosen10}^T$
&6&2003&0.25
&23&7308&0.95
&41&12373&1.87 \\
$\mathrm{flower54}^T$
&15&39636&14.5
&37&96769&36.5
&66&157342&61.2\\
$\mathrm{l30}^T$
&6&47&0.03
&83&2550&0.66
&96&3641&0.95 \\
$\mathrm{cq5}^T$
&12&35583&10.2
&22&54841&16.4
&30&62921&19.2  \\
$\mathrm{stat96v5}^T$
&12&8350&2.94
&28&16403&5.84
&50&24991&9.69 \\ \bottomrule
\end{tabular}
\end{center}
\end{table}

Notice from \eqref{defB} that the matrices $B$ in the matrix pairs
are rank deficient with $\N(B)={\rm span}\{[1,\dots,1]^T\}$ and
from Table~\ref{table0} that $A_f=\mathrm{stat96v5}^T$ is also rank deficient.
As shown in Table~\ref{table6}, for these problems, CPF-JDGSVD
succeeds to compute all the desired GSVD components, i.e.,
the clustered interior ones of $(A_i,B_i)$, $i=a,b,c,d$,
the clustered smallest ones of $(A_e,B_e)$ and the largest
nontrivial ones of $(A_f,B_f)$.


Finally, we pay special attention to the inner iterations.
The correction equation \eqref{multicorrectionsig} or
\eqref{multicorrectiontau} is {\em symmetric indefinite} and may be
ill conditioned,
which is definitely true when the desired generalized singular values
$\sigma$'s are interior ones or clustered.
When MINRES is used to solve \eqref{multicorrectionsig} or
\eqref{multicorrectiontau}, preconditioning is naturally appealing.
Unfortunately, it is generally hard to
effectively precondition such correction equations.
We have used the MATLAB built-in function {\sf ilu} with $setup.droptol=0.1$
and $0.01$ to compute sparse incomplete LU factorizations of
$A^TA-\theta^2B^TB$ and $A^TA-\tau^2B^TB$ as preconditioners,
and solved the resulting preconditioned {\em nonsymmetric} correction equations
using the BiCGStab algorithm \cite{saad2003}.
We have found that such preconditioners are very often ineffective and,
for many of the test problems, the preconditioned BiCGStab is
inferior to the unpreconditioned MINRES and uses more inner iterations.
Therefore, we do not present the results of using
the preconditioned BiCGStab.

\section{Conclusions}\label{sec:8}

We have proposed a CPF-JDGSVD method
for computing a partial GSVD of the large regular matrix pair $(A,B)$.
In the outer iterations, the method is a standard
Rayleigh--Ritz projection that implicitly solves the mathematically equivalent
generalized eigenvalue problem of $(A^TA,B^TB)$ without
explicitly forming the cross-product matrices, so that it avoids
the possible accuracy loss of the computed GSVD components.
In the inner iterations, the algorithm approximately
solves the correction equations iteratively.
We have established a convergence result on the approximate generalized
singular values and analyzed the inner and outer iterations in some depth.
Based on the results obtained, we have proposed reliable
stopping criteria for the inner iterations. To be more practical,
we have focused on several issues
and have developed a thick-restart CPF-JDGSVD
algorithm with deflation for computing more than one GSVD components of $(A,B)$
corresponding to the generalized singular values closest to
$\tau$.

Numerical experiments have confirmed the efficiency, reliability and accuracy of the
thick-restart CPF-JDGSVD algorithm with deflation
for computing both some interior and extreme GSVD components
of a large regular matrix pair. We have numerically compared
CPF-JDGSVD with JDGSVD and justified the great superiority
of the former to the latter when computing generalized singular vectors
accurately.


\begin{thebibliography}{10}

\bibitem{bai2000}
{\sc Z.~Bai, J.~Demmel, J.~Dongarra, A.~Ruhe, and H.~A. Van~der Vorst}, {\em
  Templates for the Solution of Algebraic Eigenvalue Problems: A Practical
  Guide}, SIAM, Philadelphia, PA, 2000.

\bibitem{betcke2008generalized}
{\sc T.~Betcke}, {\em The generalized singular value decomposition and the
  method of particular solutions}, SIAM J. Sci. Comput., 30 (2008),
  pp.~1278--1295.

\bibitem{bjorck1996numerical}
{\sc {\AA}.~Bj\"{o}rck}, {\em Numerical Methods for Least Squares Problems},
  SIAM, Philadelphia, PA, 1996.

\bibitem{chu1987singular}
{\sc K.-W.~E. Chu}, {\em Singular value and generalized singular value
  decompositions and the solution of linear matrix equations}, Linear Algebra
  Appl., 88 (1987), pp.~83--98.

\bibitem{chuiwang2010}
{\sc K.~Chui, Charles and J.~Wang}, {\em {R}andomized anisotropic transform for
  nonlinear dimensionality reduction}, Int. J. Geomath, 1 (2010), pp.~23--50.

\bibitem{coifman2006}
{\sc R.~R. Coifman and S.~Lafon}, {\em Diffusion maps}, Appl. Comput. Harmon.
  Anal., 21 (2006), pp.~5--30.

\bibitem{coifman2005}
{\sc R.~R. Coifman, S.~Lafon, A.~B. Lee, M.~Maggioni, B.~Nadler, F.~Warner, and
  S.~W. Zucker}, {\em Geometric diffusions as a tool for harmonic analysis and
  structure definition of data: Multiscale methods}, PNAS, 21 (2006),
  pp.~5--30.

\bibitem{davis2011university}
{\sc T.~A. Davis and Y.~Hu}, {\em The {U}niversity of {F}lorida sparse matrix
  collection}, ACM Trans. Math. Software, 38 (2011), pp.~1--25.
\newblock Data available online at
  \url{http://www.cise.ufl.edu/research/sparse/matrices/}.

\bibitem{golub2012matrix}
{\sc G.~H. Golub and C.~F. van Loan}, {\em Matrix Computations, 4th Ed.}, John
  Hopkins University Press, 2012.

\bibitem{greenbaum}
{\sc A.~Greenbaum}, {\em Iterative Methods for Solving Linear Systems}, SIAM,
  Philadephia, PA, 1997.

\bibitem{hansen1989regularization}
{\sc P.~C. Hansen}, {\em Regularization, {GSVD} and truncated {GSVD}}, BIT, 29
  (1989), pp.~491--504.

\bibitem{hansen1998rank}
{\sc P.~C. Hansen}, {\em Rank-Deficient and Discrete Ill-Posed Problems:
  Numerical Aspects of Linear Inversion}, SIAM, Philadelphia, PA, 1998.

\bibitem{hochstenbach2009jacobi}
{\sc M.~E. Hochstenbach}, {\em A {J}acobi--{D}avidson type method for the
  generalized singular value problem}, Linear Algebra Appl., 431 (2009),
  pp.~471--487.

\bibitem{howland2003structure}
{\sc P.~Howland, M.~Jeon, and H.~Park}, {\em Structure preserving dimension
  reduction for clustered text data based on the generalized singular value
  decomposition}, SIAM J. Matrix Anal. Appl., 25 (2003), pp.~165--179.

\bibitem{huang2019inner}
{\sc J.~Huang and Z.~Jia}, {\em On inner iterations of {J}acobi-{D}avidson type
  methods for large {SVD} computations}, SIAM J. Sci. Comput., 41 (2019),
  pp.~A1574--A1603.

\bibitem{huang2020choices}
{\sc J.~Huang and Z.~Jia}, {\em On choices of formulations of computing the
  generalized singular value decomposition of a matrix pair}, Numer. Algor.,
  (2020).
\newblock doi: 10.1007/s11075-020-00984-9.

\bibitem{jia2014inner}
{\sc Z.~Jia and C.~Li}, {\em Inner iterations in the shift--invert residual
  {A}rnoldi method and the {J}acobi--{D}avidson method}, Sci. China Math., 57
  (2014), pp.~1733--1752.

\bibitem{jia2015harmonic}
{\sc Z.~Jia and C.~Li}, {\em Harmonic and refined harmonic shift-invert
  residual {A}rnoldi and {J}acobi--{D}avidson methods for interior eigenvalue
  problems}, J. Comput. Appl. Math., 282 (2015), pp.~83--97.

\bibitem{jia2001analysis}
{\sc Z.~Jia and G.~Stewart}, {\em An analysis of the {R}ayleigh--{R}itz method
  for approximating eigenspaces}, Math. Comput., 70 (2001), pp.~637--647.

\bibitem{jia2018joint}
{\sc Z.~Jia and Y.~Yang}, {\em A joint bidiagonalization based iterative
  algorithm for large scale general-form tikhonov regularization}, Appl. Numer.
  Math., 157 (2020), pp.~159--177.

\bibitem{kaagstrom1984generalized}
{\sc B.~K{\aa}gstr{\"o}m}, {\em The generalized singular value decomposition
  and the general ({A}$-\lambda${B})-problem}, BIT, 24 (1984), pp.~568--583.

\bibitem{paige1981towards}
{\sc C.~C. Paige and M.~A. Saunders}, {\em Towards a generalized singular value
  decomposition}, SIAM J. Numer. Anal., 18 (1981), pp.~398--405.

\bibitem{park2005relationship}
{\sc C.~H. Park and H.~Park}, {\em A relationship between linear discriminant
  analysis and the generalized minimum squared error solution}, SIAM J. Matrix
  Anal. Appl., 27 (2005), pp.~474--492.

\bibitem{saad2003}
{\sc Y.~Saad}, {\em Iterative Methods for Sparse Linear Systems, 2nd Ed.},
  SIAM, Philadelphia, PA, 2003.

\bibitem{stath1998}
{\sc A.~Stathopoulos, Y.~Saad, and K.~Wu}, {\em Dynamic thick restarting of the
  {D}avidson and the implicitly restarted {A}rnoldi methods}, SIAM J. Sci.
  Comput., 19 (1998), pp.~227--245.

\bibitem{stewart90}
{\sc G.~W. Stewart and J.~G. Sun}, {\em Matrix Perturbation Theory}, Acadmic
  Press, Inc., Boston, 1990.

\bibitem{vanhuffel}
{\sc S.~Van~Huffel and P.~Lemmerling}, {\em Total {L}east {S}quares and
  {E}rrors-in-{V}ariables {M}odeling}, Kluwer Academic Publishers, 2002.

\bibitem{van1976generalizing}
{\sc C.~F. van Loan}, {\em Generalizing the singular value decomposition}, SIAM
  J. Numer. Anal., 13 (1976), pp.~76--83.

\bibitem{zha1996computing}
{\sc H.~Zha}, {\em Computing the generalized singular values/vectors of large
  sparse or structured matrix pairs}, Numer. Math., 72 (1996), pp.~391--417.

\bibitem{zwaan2019}
{\sc I.~N. Zwaan}, {\em Cross product-free matrix pencils for computing
  generalized singular values},  (2019).
\newblock arXiv:1912.08518 [math.NA].

\bibitem{zwaan2017generalized}
{\sc I.~N. Zwaan and M.~E. Hochstenbach}, {\em Generalized {D}avidson and
  multidirectional-type methods for the generalized singular value
  decomposition},  (2017).
\newblock arXiv:1705.06120 [math.NA].

\end{thebibliography}
\end{document}